\numberwithin{equation}{section}
\newcommand{\Cetad}{C_{\underline{\eta}}}
\newcommand{\Cetau}{C_{\overline{\eta}}}
\newcommand{\Cfu}{C_{\overline{{\boldsymbol g}}}}
\newcommand{\fes}{\mathbb{V}}
\newcommand{\norm}[1]{\ensuremath{\left|#1\right|}}
\newcommand{\Norm}[1]{\ensuremath{\left\|#1\right\|}}
\newcommand{\hu}{{\bf {\hat u}}}
\newcommand{\eg}{e.g., }
\newcommand{\ie}{i.e., }
\newcommand{\leb}[1]{\ensuremath{L_{#1}}}
\newcommand{\jump}[1]{\ensuremath{\left\llbracket #1\right\rrbracket}}
\renewcommand{\d}{\operatorname{d}}
\newcommand{\sob}[2]{\ensuremath{W^{#1}_{#2}}}
\newcommand{\pdx}{\partial_x}
\newcommand{\bu}{{\bf u}}
\newcommand{\bv}{{\bf v}}
\newcommand{\bfl}{{\bf f}}
\newcommand{\bh}{{\bf h}}
\newcommand{\tu}{\tilde {\bu}}
\newcommand{\bw}{{\bf w}}
\newcommand{\bR}{{\bf R}}
\newcommand{\bg}{{\bf g}}
\newcommand{\bG}{{\bf G}}
\newcommand{\ter}{\hu^t}
\newcommand{\str}{\hu^{st}}
\newcommand{\cO}{\mathcal{O}}
\newcommand{\cP}{\mathcal{P}}
\newtheorem{lemma}{Lemma}[section]
\newtheorem{theorem}{Theorem}[section]
\newtheorem{assumption}[lemma]{Assumption}
\newtheorem{corollary}[lemma]{Corollary}
\theoremstyle{definition}
\newtheorem{definition}[lemma]{Definition}
\theoremstyle{definition}
\newtheorem{remark}[lemma]{Remark}
\title[]{A posteriori analysis of fully discrete method of lines DG schemes for systems of conservation laws}
\author{Andreas Dedner}
\address[Andreas Dedner]{\newline 
Mathematics Institute\newline
Zeeman Building\newline
University of Warwick\newline
Coventry CV4 7AL\newline
UK}
\curraddr{}
\email{a.s.dedner@warwick.ac.uk}
 \author{Jan Giesselmann} 
 \address[Jan Giesselmann]{\newline
  Institute of Applied Analysis and Numerical Simulation\newline
 University of Stuttgart\newline
 Pfaffenwaldring 57\newline
 D-70563 Stuttgart\newline
   Germany} 
 \curraddr{}
 \email{{jan.giesselmann@mathematik.uni-stuttgart.de}} 
 \thanks{
JG partially supported by the 
 German Research Foundation (DFG) via SFB TRR 75 `Tropfendynamische Prozesse unter extremen Umgebungsbedingungen'}
\date{\today}
\begin{document}

\begin{abstract}
  We present reliable a posteriori estimators for some
  fully discrete schemes applied to
  nonlinear systems of hyperbolic conservation laws in one space dimension with strictly convex entropy. 
  The schemes are based on a method of lines approach combining discontinuous Galerkin spatial discretization
  with single- or multi-step methods in time. The construction of the
  estimators requires a reconstruction in time for which we present a very
  general framework first for odes and then apply the approach to
  conservation laws. The reconstruction does not depend on the actual method used for evolving
  the solution in time. Most importantly it covers in addition to implicit methods
  also the wide range of explicit methods typically used to solve conservation laws. 
  For the spatial discretization, we allow for standard choices of
  numerical fluxes. We use \emph{reconstructions} of the discrete solution together
  with the \emph{relative entropy} stability framework,
  which leads to error control in the case of smooth solutions. We study
  under which conditions on the numerical flux the estimate is of optimal
  order pre-shock. 
  While the estimator we derive is
  computable and valid post-shock for fixed meshsize, it will blow
  up as the meshsize tends to zero.  This is due to a breakdown of the 
  relative entropy framework when discontinuities develop.
  We conclude with some numerical benchmarking 
  to test the robustness of the derived estimator.
\end{abstract}

\maketitle

\section{Introduction}
Systems of hyperbolic balance laws
are widely used in continuum mechanical modelling of processes in which 
higher order effects like diffusion and dispersion can be neglected, with the Euler- and the shallow water equations being prominent examples.
A particular feature of these equations is the breakdown of smooth solutions to initial value problems
for generic (smooth) initial data after finite time.
After this 'shock formation' discontinuous weak solutions are considered and attention is 
restricted to those satisfying a so-called entropy inequality.
Due to the interest in discontinuous solutions finite volume and discontinuous Galerkin (DG) spatial 
discretizations in space are state of the art \cite{GR96,Kro97,LeV02,HW08}.
In case of nonlinear systems the amount of theoretical results backing up these schemes
is quite limited. 
 A posteriori results for systems were derived in \cite{Laf04,Laf08} for  front tracking and Glimm's schemes, see also \cite{KLY10}.
 A priori estimates for fully discrete Runge-Kutta DG schemes were obtained in \cite{ZS06}
and an a posteriori error estimator for semi-discrete schemes was introduced in \cite{GMP_15}.
Other a priori and a posteriori results using the relative entropy method include 
\cite{AMT04,JR05,JR06}.
In \cite{HH02} the authors derive a posteriori estimates for space-time DG schemes in a goal oriented framework,
provided certain dual problems are well-posed.
All these results deal mainly with the pre-shock case.
Arguably, this is due to the following reason: the well-posedness of generic initial value problems
for (multi-dimensional) systems of hyperbolic conservation laws is quite open.
While uniqueness of entropy solutions was expected by many researchers for a long time, it was shown 
in recent years \cite{Chi14,DS10} that entropy solutions of nonlinear multi-dimensional systems are not unique, in general.
This severely restricts the range of cases in which a priori error estimates or convergent error estimators can be expected.
This is in contrast to the situation for scalar hyperbolic problems for which 
a priori convergence rates \cite{EGGH98} and convergent a posteriori error estimators
\cite{GM00,KO00,DMO07} are available post-shock; based on a more discriminating notion of entropy solution
\cite{Kru70}.
Still, the simulation of multi-dimensional hyperbolic balance laws is an important field,
and the numerically obtained results coincide well with experimental data.
In the simulations error indicators based on either entropy dissipation of the numerical
solution \cite{PS11} or nodal super convergence \cite{AABCP11} are used.
Our goal is to complement these results by a rigorous error estimator which, for certain types of numerical fluxes, is of optimal order, i.e.,
of the order of the true error, as long as the solution is
smooth. In case there is no Lipschitz continuous solution, but (possibly several) discontinuous entropy solutions, our estimator is still an upper bound for the error of the method, but it does 
not converge to zero under mesh refinement.
Our work is based on the results for semi-(spatially)-discrete methods in \cite{GMP_15} which we extend in two directions:
Firstly the results obtained here account for fully discrete, e.g. Runge-Kutta discontinuous Galerkin type, schemes and secondly we treat a larger class of numerical fluxes than was treated in \cite{GMP_15}.
Our results are optimal for a large class of central fluxes (e.g. of Richtmyer or Lax-Wendroff type)
augmented with stabilization approaches like artificial viscosity or flux
limiting \cite{Lapidus:1967:ArtVisc,Toro:2000:FLIC}. 
Furthermore, we also prove optimal convergence for Roe type numerical
fluxes for systems of conservation laws.

Our work is based on reconstructions in space and time, see  \cite{Mak07} for a general
exposition on the idea of reconstruction based error estimators, and the relative entropy
stability theory, going back to \cite{Daf79,Dip79}.
While the idea of reconstruction based a posteriori error estimates was extensively employed for implicit methods, see \cite[e.g.]{MN06}, it has not been used for explicit methods before.
Thus, we will describe our temporal reconstruction approach for general systems of odes first.
Our approach for reconstruction in the context of odes (which might be semi-discretizations of PDEs)
differs from other error estimation approaches \cite[e.g.]{Hig91,MN06} by being based on Hermite interpolation and by 
using information from old time steps. An advantage of this approach is
that the reconstruction does not depend on the time stepping method used
(covering for example both general IMEX type Runge-Kutta or multi-step
methods). The spatial reconstruction follows the approach first described in
\cite{GMP_15}. As mentioned above we extend the class of methods for which
optimal convergence of the estimate can be shown. Furthermore, we prove that
within the class of reconstructions considered here,
our restriction on the flux is not only sufficient for optimal convergence
but also necessary. We also show for a Roe type flux that whether the error estimator is suboptimal or  optimal
depends on the choice of reconstruction.


The layout of the rest of this work is as follows. In Section \ref{sec:ode}
we introduce a reconstruction approach for general  single- or multi-step discretizations of odes
and show that it leads to residuals of optimal order, \ie of the same order as 
the error.
This approach is applied to fully discrete discontinuous Galerkin schemes approximating systems 
of hyperbolic conservation laws in one space dimension endowed with a strictly convex entropy in Section \ref{sec:claws}.
We present numerical experiments in Section \ref{sec:num}.


\section{Reconstructions and error estimates for odes}\label{sec:ode}
In this section we introduce reconstructions for general  single- or multi-step methods approximating initial value problems for first order systems of odes.
The general set-up will be an initial value problem
\begin{equation}\label{ode} \d_t \bu = \bfl(t,\bu), \ \text{ on } (0,T),\ \bu(0)=\bu_0 \in \mathbb{R}^m\end{equation}
for some finite time $T>0$ and $\bfl: [0,T]\times \mathbb{R}^m\longrightarrow \mathbb{R}^m.$
We will usually assume that $\bfl$ is at least Lipschitz but we will specify our
regularity assumptions on $\bfl$ later.

Our reconstruction approach is based on Hermite (polynomial) interpolation
and the order of the employed polynomials depends on the convergence order of the method.

Our aim is to construct a continuous reconstruction of the solution to the
ode which does not depend on the actual method used to obtain the values $\{\bu^n\}_{n=0}^N$
approximating the exact solution at times $\{ t_n\}_{n=0}^N$ with $0=t_0< t_1 < \dots < t_N=T.$
Also note that, while $\{\bu^n\}_{n=0}^N$ are good approximations of
the exact solution at the corresponding points in time, this is usually not true for intermediate values 
$\{\bu^{n,i}\}_{n=0}^{N-1}$ which are frequently computed during, \eg  Runge-Kutta time-steps.
It is therefore in general a non trivial task to include these in the reconstruction.

\subsection{Reconstruction}\label{grks}
To define our reconstruction we need to introduce some notation:
By $\mathbb{P}_q(I,V)$ we denote the set of polynomials of degree at most $q$ on some interval $I$ with values in some vector space $V$ and
$\fes_q$ denotes a space of (possibly discontinuous) functions which are piece-wise polynomials of degree less or equal $q,$ \ie
\begin{equation}
 \fes_q := \{ {\bf w}: [0,T] \rightarrow \mathbb{R}^m \, : \, {\bf w}|_{(t_{n-1},t_{n})} \in \mathbb{P}_q((t_{n-1},t_{n}),\mathbb{R}^m) \text{ for } n=1,\dots, N\}.
\end{equation}
For any $n$ and ${\bf w} \in \fes_q$ we define traces by
\begin{equation}\label{def:traces}
 {\bf w}(t_n^\pm):= \lim_{s \searrow 0} {\bf w}(t_n \pm s)
\end{equation}
and element-wise derivatives $\d^e_t {\bf w} \in \fes_{q-1}$ by
\begin{equation} 
(\d^e_t {\bf w})|_{(t_n,t_{n+1})} = \d_t ( {\bf w}|_{(t_n,t_{n+1})}).
\end{equation}

Let $\{\bu^n\}_{n=0}^N$ denote the approximations of the solution of \eqref{ode} at times $\{t_n\}_{n=0}^N$ computed using some single- or multi-step method. 
We will define $\hu$ as a $C^0$ or even $C^1$-function which is piece-wise polynomial and whose polynomial degree matches the convergence order of the method.
To define $\hu|_{[t_n,t_{n+1}]}$ the information $\bu^n, \bu^{n+1}, \bfl(\bu^n), \bfl(\bu^{n+1})$ is readily available, but 
for polynomial degrees above $3$ we need additional conditions. 
Let $\hu^n$ denote the polynomial which coincides with $\hu$ on $[t_n,t_{n+1}].$ Note that $\hu^n$ differs from $\hu|_{[t_n,t_{n+1}]}$ by being defined on all of $\mathbb{R}.$
Then, $\hu|_{[t_n,t_{n+1}]}$ can be obtained by prescribing values of $\hu^n$, $(\hu^n)'$ at additional points or by prescribing additional derivatives of $\hu^n$ at $t_n$ and $t_{n+1}$ or by a combination of both approaches.

For $(p,d,r) \in \mathbb{N}_0^3$ we denote by $H(p,d,r)$  the reconstruction which fixes the value and the first $d+1$ derivatives of $\hu^n $ at $t_{n-p},\dots, t_n$ and the value and the first $r+1$ derivatives of $\hu^n$ at $t_{n+1}.$

Before we make this more precise let us note that we can express higher order derivatives of the solution $\bu$ to \eqref{ode} by evaluating $\bfl$ and its derivatives, \eg
\begin{equation}\label{fn2}
 \d_t^2 \bu(t_n)= \partial_t \bfl(t_n,\bu(t_n)) + \operatorname{D} \bfl(t_n,\bu(t_n)) \bfl(t_n,\bu(t_n)),
\end{equation}
where $\operatorname{D} \bfl$ is the Jacobian of $\bfl$ with respect to $\bu,$
provided $\bfl$ is sufficiently regular.
We denote the corresponding expression for
$\d_t^k \bu(t_n)$ by $\bfl_k^n(\bu(t_n)).$
Note that instead of $\bu(t_n)$ we may also insert $\bu^n$ into this expression.

\begin{remark}[Regularity of $\bfl$]
Subsequently we impose $\bfl \in C^{\max{\{d,r\}}}((0,T)\times \mathbb{R}^m, \mathbb{R}^m)$. 
This is the amount of regularity required to define our reconstruction.
It is not sufficient for the Runge-Kutta method to have (provably) an
error of order $\cO(\tau^q)$, where $\tau$ is the maximal time step size.
When investigating the optimality of the residual in Section \ref{subs:oode}
we will indeed require more regularity of $\bfl.$
\end{remark}

Now we are in position to define our reconstruction:

\begin{definition}[Reconstruction for odes]\label{def:grec}
The $H(p,d,r)$ reconstruction  $\hu \in \fes_q$ with $q=(d+2)(p+1)+r+1$ is determined  by
\begin{equation}\label{grec}
\begin{split}
 (\d_t)^k \hu^n(t_j) &=\bfl_k^j(\bu^j), \text{ for } k=0, \dots, d+1 \text{ and } j=n-p, \dots, n \\
 (\d_t)^k \hu^n(t_{n+1})   &=\bfl_k^{n+1}(\bu^{n+1})\text{ for } k=0, \dots, r+1.
 \end{split}
\end{equation}
\end{definition}

\begin{remark}[Start up]
 Note that (strictly speaking) the $H(p,d,r)$ reconstruction is not defined on $[t_0,t_p].$
 However, computing the numerical solution for the first $p+1$ time steps we may use conditions at $\{t_0,\dots,t_p\}$
 to define $\hu|_{[t_0,t_p]}$ in an analogous way.
\end{remark}

By standard results on Hermite interpolation we know that:

  \begin{lemma}[Properties of reconstruction]
   Any $H(p,d,r)$ reconstruction $\hu \in \fes_q$ with $q= (d+2)(p+1)+r+1$
   as given in Definition \ref{def:grec}   is well-defined, computable and
   $\sob{1}{\infty}$ in time.
   For higher values of $d,r$ we even have that $\hu$ is $\min\{d+1,r+1\}$-times continuously differentiable.
  \end{lemma}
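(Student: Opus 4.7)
The plan is to verify the three claims of the lemma in turn: well-definedness and computability on each interval, the global $W^{1,\infty}$ regularity, and finally the higher continuous differentiability under stronger assumptions on $d$ and $r$.

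First I would fix an interval $[t_n,t_{n+1}]$ and show that the polynomial $\hu^n \in \mathbb{P}_q(\mathbb{R},\mathbb{R}^m)$ prescribed by \eqref{grec} exists and is unique. The degree-of-freedom count is the bookkeeping step: on the left end we impose $d+2$ conditions (value and first $d+1$ derivatives) at each of the $p+1$ nodes $t_{n-p},\dots,t_n$, giving $(d+2)(p+1)$ conditions, and on the right end $r+2$ conditions at $t_{n+1}$. The total is
\begin{equation*}
(d+2)(p+1) + r + 2 \;=\; q+1 \;=\; \dim \mathbb{P}_q(\mathbb{R},\mathbb{R}^m).
\end{equation*}
Since the $p+2$ nodes involved are pairwise distinct (by the strict ordering $t_0<t_1<\dots<t_N$), the Hermite interpolation problem is unisolvent; the standard argument is that the kernel consists of a polynomial of degree at most $q$ with a zero of order $d+2$ at $p+1$ distinct points and a zero of order $r+2$ at one further point, whose total multiplicity equals $q+1$, forcing it to vanish identically. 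This gives existence and uniqueness of $\hu^n$, hence of $\hu|_{[t_n,t_{n+1}]}$.

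For computability I would observe that the right hand sides $\bfl_k^j(\bu^j)$ are, for $k\le\max\{d,r\}+1$, determined by finitely many compositions of derivatives of $\bfl$ (as illustrated by \eqref{fn2}) evaluated at the already available data $(t_j,\bu^j)$. Our standing regularity assumption $\bfl\in C^{\max\{d,r\}}$ (together with one more derivative for the additional index $k=d+1$ or $r+1$ in \eqref{grec}) is exactly what is needed for these expressions to make sense, and the coefficients of $\hu^n$ in any chosen basis of $\mathbb{P}_q$ are then obtained by solving the resulting linear system with the (invertible) Hermite matrix. The start-up remark handles the interval $[t_0,t_p]$ analogously.

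Finally I would address the global regularity. The subtle point is what happens at an interior node $t_{n+1}$, where two adjacent polynomial pieces $\hu^n$ and $\hu^{n+1}$ meet. On the one hand, the right-end conditions of $\hu^n$ at $t_{n+1}$ fix $(\d_t)^k\hu^n(t_{n+1})=\bfl_k^{n+1}(\bu^{n+1})$ for $k=0,\dots,r+1$; on the other, the left-end conditions of $\hu^{n+1}$ at $t_{n+1}$ (corresponding to index $j=n+1$ in the first line of \eqref{grec} applied with $n$ replaced by $n+1$) fix $(\d_t)^k\hu^{n+1}(t_{n+1})=\bfl_k^{n+1}(\bu^{n+1})$ for $k=0,\dots,d+1$. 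Thus all derivatives up to order $\min\{d+1,r+1\}$ match across $t_{n+1}$, so $\hu$ is globally $C^{\min\{d+1,r+1\}}$. In the minimal case $d=r=0$ this already gives $C^1$, which in particular implies $\sob{1}{\infty}$ on the compact interval $[0,T]$; for larger $d,r$ one reads off the stronger smoothness directly. The main obstacle is really only this consistent bookkeeping of interpolation conditions across element interfaces: once one checks that the right-end data of $\hu^n$ and the left-end data of $\hu^{n+1}$ come from the same expressions $\bfl_k^{n+1}(\bu^{n+1})$, the continuity statement is immediate.
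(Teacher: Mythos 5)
Your argument is correct and is essentially the standard Hermite-interpolation reasoning the paper invokes without proof: the degree count $(d+2)(p+1)+r+2=q+1$, unisolvency at distinct nodes, and the observation that at each interface both adjacent pieces interpolate the same data $\bfl_k^{n+1}(\bu^{n+1})$ up to order $\min\{d+1,r+1\}$ together give well-definedness, computability, global $C^{\min\{d+1,r+1\}}\subset\sob{1}{\infty}$ regularity. One tiny slip worth noting: no derivative of $\bfl$ beyond the standing assumption $\bfl\in C^{\max\{d,r\}}$ is needed, since $\bfl_k$ involves derivatives of $\bfl$ only up to order $k-1$, so the top index $k=\max\{d,r\}+1$ in \eqref{grec} is already covered.
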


\begin{remark}[Particular methods]
 In the work at hand we will restrict our attention to two classes of methods: Either methods of type $H(p,0,0)$ or methods of types $H(0,d,d)$ and $H(0,d,d-1).$
 Both methods need the same amount of extra storage to arrive at the same order.
 Methods of type $H(p,0,0)$ have the advantage that no derivatives of $\bfl$ need to be evaluated or to be approximated. 
 In case, derivatives of $\bfl$ are explicitly known and can be cheaply evaluated $H(0,d,d)$ has the advantage that no information from old time steps has to be accessed.
 We choose the numbers of conditions imposed at $t_n$ and $t_{n+1}$ to be equal, so that all the information created at $t_{n+1}$ can be reused when $\hu|_{[t_{n+1},t_{n+2}]}$ is computed.
\end{remark}

 \begin{remark}[Derivatives of $\bfl$]
 In many cases of importance, \eg $\bfl$ being a spatial semi-discretization
 of a system of hyperbolic conservation laws,
 the explicit computation of derivatives of $\bfl$  might be infeasible or numerically expensive.
 Thus, for $H(0,d,d)$ with $d \geq 1$ it seems interesting to replace, \eg $\operatorname{D}\bfl(\bu)\bfl(\bu)$
 by an approximation thereof.
 We will elaborate upon this in Section \ref{subs:approxf}.\end{remark}

Using any of the methods $H(p,d,r)$ the reconstruction $\hu$ is explicitly (and locally) computable, continuous and piece-wise polynomial, thus
\begin{equation}\label{tres} {\bf R}:= \d_t \hu - \bfl(\hu) \in \leb{\infty}(0,T)\end{equation}
is computable.
Therefore, standard stability theory for odes implies our first a posteriori results.

\begin{lemma}[A posteriori estimates for odes]\label{lem:apode}
Let \eqref{ode} have an exact solution $\bu$ and let $\bfl$ be Lipschitz with respect to $\bu$, with Lipschitz constant $L,$ on a neighbourhood of the set of values taken
by $\bu$ and $\hu.$ Then,
\begin{equation}
 \Norm{ \bu - \hu }_{\leb{\infty}(0,T)} \leq (\norm{ \bu_0 - \hu(0)} + \Norm{{\bf R}}_{\leb{1}(0,T)}) e^{LT}
\end{equation}
and 
\begin{equation}
 \Norm{ \bu - \hu }_{\leb{2}(0,T)}^2 \leq (\norm{ \bu_0 - \hu(0)}^2 + \Norm{{\bf R}}^2_{\leb{2}(0,T)}) e^{(L+1)T}
\end{equation}
with ${\bf R}$ being defined in \eqref{tres}.
\end{lemma}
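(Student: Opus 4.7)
The plan is to recognize that the reconstruction $\hu$ satisfies a perturbed version of the original ODE, and then to apply the standard Gronwall-type stability estimates. Concretely, the defining equation \eqref{tres} for the residual can be rewritten as
\begin{equation*}
\d_t \hu = \bfl(t,\hu) + {\bf R},
\end{equation*}
so that, setting $\bw := \bu - \hu$ and subtracting from \eqref{ode}, the error satisfies
\begin{equation*}
\d_t \bw = \bfl(t,\bu) - \bfl(t,\hu) - {\bf R}, \qquad \bw(0) = \bu_0 - \hu(0).
\end{equation*}
Once this is in hand, both bounds are immediate consequences of the Lipschitz hypothesis.

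For the $\leb{\infty}$ estimate I would integrate the error equation on $[0,t]$, take absolute values, and invoke the Lipschitz assumption on $\bfl$ to obtain
\begin{equation*}
|\bw(t)| \leq |\bw(0)| + \int_0^t |{\bf R}(s)|\,\d s + L\int_0^t |\bw(s)|\,\d s.
\end{equation*}
Applying the (integral form of) Gronwall's inequality to the quantity $|\bw(t)|$ yields the asserted $\leb{\infty}$ bound after taking the supremum over $t\in[0,T]$; the $e^{LT}$ factor is exactly the Gronwall amplification.

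For the $\leb{2}$ estimate I would take the Euclidean inner product of the error equation with $\bw$, giving
\begin{equation*}
\tfrac{1}{2}\d_t |\bw|^2 = (\bfl(t,\bu) - \bfl(t,\hu))\cdot \bw - {\bf R}\cdot \bw,
\end{equation*}
then bound the first term by $L|\bw|^2$ and apply Young's inequality $|{\bf R}\cdot \bw|\leq \tfrac{1}{2}|\bw|^2 + \tfrac{1}{2}|{\bf R}|^2$. Integrating in time and applying Gronwall's lemma to $|\bw(t)|^2$ gives the claimed form, with the exponent reflecting the combined contribution of the Lipschitz term and the $|\bw|^2$ part coming from Young's inequality.

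There is essentially no obstacle here: the whole point of the construction of $\hu$ is to make ${\bf R}$ a computable quantity, after which the estimate is a textbook perturbation result. The only care needed is to verify that $\bfl$ is actually Lipschitz along the trajectory in the hypothesis —  which is why the statement restricts the Lipschitz requirement to a neighbourhood of the range of $\bu$ and $\hu$ rather than globally — so that the estimate above is justified on all of $[0,T]$.
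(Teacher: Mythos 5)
Your strategy---rewrite \eqref{tres} as the perturbed problem $\d_t \hu = \bfl(t,\hu) + {\bf R}$, subtract from \eqref{ode}, and run Gronwall---is precisely the ``standard stability theory for odes'' that the paper invokes (it gives no detailed proof of this lemma), and your $\leb{\infty}$ argument is complete: integrating the error equation and applying the integral form of Gronwall does give $\norm{\bw(t)} \leq \bigl(\norm{\bw(0)} + \Norm{\bR}_{\leb{1}(0,T)}\bigr)e^{LT}$, which is the first estimate verbatim.

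The $\leb{2}$ part, however, does not deliver the bound you claim it does. From
\begin{equation*}
\tfrac{1}{2}\tfrac{\d}{\d t}\norm{\bw}^2 \leq L\norm{\bw}^2 + \norm{\bR}\,\norm{\bw}
\leq \bigl(L+\tfrac12\bigr)\norm{\bw}^2 + \tfrac12\norm{\bR}^2
\end{equation*}
you obtain $\tfrac{\d}{\d t}\norm{\bw}^2 \leq (2L+1)\norm{\bw}^2 + \norm{\bR}^2$, so Gronwall (applied either to $\norm{\bw(t)}^2$, or---better, since the left-hand side of the lemma is the $\leb{2}(0,T)$ norm in time---to $G(t)=\int_0^t\norm{\bw}^2\d s$) produces the exponent $(2L+1)T$, not the $(L+1)T$ stated in the lemma; the factor $2$ coming from differentiating $\norm{\bw}^2$ cannot be removed by re-balancing Young's inequality, and squaring your $\leb{\infty}$ bound likewise gives $e^{2LT}$. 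So the sentence ``applying Gronwall's lemma \dots gives the claimed form'' papers over a genuine discrepancy: what your computation proves is the estimate with $e^{(2L+1)T}$ (and you should also make explicit the step converting the pointwise Gronwall bound into the $\leb{2}(0,T)$ norm on the left). Since for $\bR\equiv 0$ and $\bfl(\bu)=L\bu$ the error genuinely grows like $\norm{\bw(0)}e^{Lt}$, giving $\Norm{\bw}^2_{\leb{2}(0,T)}$ of size $\norm{\bw(0)}^2 e^{2LT}/(2L)$, the smaller exponent $e^{(L+1)T}$ is not reachable by this (or any purely Lipschitz-based) argument for large $LT$; you should either state and prove the version with $e^{(2L+1)T}$ or explicitly flag the constant in the paper as requiring a different justification, rather than asserting that your estimate matches it.
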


\begin{remark}[Influence of the Lipschitz constant of $\bfl$]
The appearance of the Lipschitz constant of $\bfl$ in the error estimators in Lemma 
\ref{lem:apode} cannot be avoided for general right hand sides $\bfl.$
However, for many particular right hand sides it can be avoided,
because better stability results for the underlying ode are available. 
A particular example are right hand sides deriving from the spatial discretization of systems of 
hyperbolic conservation laws which we will investigate in Section \ref{sec:claws}.
\end{remark}

\subsection{Optimality of the residual}\label{subs:oode}
In this section we investigate the order of ${\bf R}$. 
To this end we restrict ourselves to an equidistant time step $\tau>0.$
We also assume that there exists a constant $L>1$ such that for all $k,l \in \mathbb{N}$
\begin{equation}\label{Lip}
 \norm{\partial_t^l\operatorname{D}^k \bfl (t,{\bf y})  - \partial_t^l\operatorname{D}^k \bfl (t,\bar {\bf y})} 
 \leq L^{k+1} \norm{{\bf y} - \bar {\bf y}} \quad \forall \ t \in [0,T], \ {\bf y},\bar {\bf y} \in \mathbb{R}^m,
\end{equation}
where on the left hand side $\norm{\cdot}$ denotes an appropriate norm on $\mathbb{R}^{m^{k+1}}.$
\begin{remark}[Parameter dependence]
 Note that we explicitly keep track of the dependence of the subsequent estimates on $L,$ from \eqref{Lip},  while we 
 suppress all other constants using the Landau $\cO$ notation.
 This is due to the fact that we will apply the results obtained here to spatial semi-discretizations of hyperbolic conservation laws in Section
 \ref{sec:claws}. In that case the Lipschitz constant $L$  depends on the spatial mesh width,
 which for practical computations is of the same order of magnitude as the time step size.
\end{remark}

For a time integration method of order $q$ we consider a reconstruction
 of type $H(0,d,r)$ such that  $q=d+r+3$ and $r\in \{ d, d-1\}.$ In this way
the polynomial degree of the reconstruction coincides with the order of the method.
The analysis for residuals stemming from reconstructions of type $H(p,0,0)$ is analogous.

For estimating the residual we make use of two auxiliary functions:
Firstly, for $n=0,\dots,N-1$ we denote by $\tu^n$ the exact solution to the initial value problem
\begin{equation}\label{auxivp}
\d_t \tu^n(t) = \bfl(t,\tu^n(t)) \quad \text{ on } (t_n,t_{n+1}), \quad \tu^n(t_n)=\bu^n.
\end{equation}
We assume from now on that $\bfl$ is indeed regular enough for the method at hand
to be convergent of order $q$. In particular, we assume consistency errors to be of order $q+1$, \ie
\[ \norm{ \tu^n(t_{n+1}) - \bu^{n+1}} = \cO(\tau^{q+1}).\]

 Secondly, for $n=0,\dots,N-1$ let $\bh^n\in \mathbb{P}_q((t_n,t_{n+1}),\mathbb{R}^m)$ be the Hermite interpolation of $\tu^n$, \ie
\begin{equation}\label{herm}
\begin{split}
 (\d^e_t)^k \bh^n(t_n^+) &=\bfl_k^n(\tu^n(t_n)), \text{ for } k=0, \dots, d+1 \\
 (\d^e_t)^k \bh^n(t_{n+1}^-)   &=\bfl_k^{n+1}(\tu^n(t_{n+1}))\text{ for } k=0, \dots, r+1.
 \end{split}
\end{equation}
By standard results on Hermite interpolation we have
\begin{equation}
\label{hermest}
 \Norm{ \tu^n - \bh^n}_{\leb{\infty}(t_n,t_{n+1})} = \cO(\tau^{q+1})\quad \text{and} \quad
 \Norm{ \tu^n - \bh^n}_{\sob{1}{\infty}(t_n,t_{n+1})} = \cO(\tau^{q})
\end{equation}
for $n=0,\dots,N-1.$

Let us write $\bR^n$ instead of $\bR|_{(t_n,t_{n+1})}$ for brevity.
Because of \eqref{auxivp} we may rewrite \eqref{tres} as
\begin{multline}
 \label{tres2}
 \bR^n= \d_t(\hu|_{(t_n,t_{n+1})} - \bh^n) + \d_t (\bh^n - \tu^n)\\ - (\bfl(\hu|_{(t_n,t_{n+1})}) - \bfl(\bh^n)) - (\bfl(\bh^n) - \bfl(\tilde \bu^n))=: \bR_1^n + \bR_2^n + \bR_3^n +\bR_4^n
\end{multline}
and \eqref{hermest} immediately implies
\begin{equation}\label{Rest1}
 \Norm{ \bR_2^n}_{\leb{\infty}(0,T)} = \cO(\tau^{q}), \quad  
 \Norm{ \bR_4^n}_{\leb{\infty}(0,T)} = L \cO(\tau^{q+1}).
\end{equation}

Regarding, the estimates of $\bR_1^n,\bR_3^n$ we make use of the fact that
$\hu|_{(t_n,t_{n+1})} - \bh^n \in \mathbb{P}_q((t_n,t_{n+1}),\mathbb{R}^m)$ satisfies
\begin{equation}\label{difrec}
\begin{split}
 (\d^e_t)^k (\hu|_{(t_n,t_{n+1})}-\bh^n)(t_n^+) &=0  , \text{ for } k=0, \dots, d+1 \\
 (\d^e_t)^k (\hu|_{(t_n,t_{n+1})}-\bh^n)(t_{n+1}^-)   &=\bfl_k^{n+1}(\bu^{n+1}) -\bfl_k^{n+1}(\tu^n(t_{n+1})) \text{ for } k=0, \dots, r+1
 \end{split}
\end{equation}
and due to \eqref{Lip} and consistency of the underlying method
\begin{equation}\label{difrec1}
\bfl_k^{n+1}(\bu^{n+1}) -\bfl_k^{n+1}(\tu^n(t_{n+1})) = L^{k} \cO(\tau^{q+1})\text{ for } k=0, \dots, r+1.
\end{equation}

\begin{lemma}[Stability of Hermite interpolation]\label{lem:Hermite}
 Let $\bv \in \fes_q$ satisfy 
 \begin{equation}\label{hermlem}
\begin{split}
 (\d^e_t)^k \bv(t_n^+) &=  a_k , \text{ for } k=0, \dots, d+1 \\
 (\d^e_t)^k \bv(t_{n+1}^-)   &=b_k, \text{ for } k=0, \dots, r+1
 \end{split}
\end{equation}
with sequences $\{a_k\}_{k=0}^{d+1}, \{ b_k\}_{k=0}^{r+1}$ satisfying
\[ a_k, b_k \leq L^{k} \cO(\tau^{q+1})\]
for some $L,\tau>0.$
Then, it holds
\begin{equation}\label{eq:Hermite1}
 \Norm{ \bv}_{\leb{\infty} (0,T)} \leq \sum_{k=0}^q L^k \cO(\tau^{q+1+k})
\end{equation}
and 
\begin{equation}\label{eq:Hermite2}
 \Norm{\d_t \bv}_{\leb{\infty} (0,T)} \leq \sum_{k=0}^q L^k \cO(\tau^{q+k}).
\end{equation}
\end{lemma}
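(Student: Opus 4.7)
The plan is a standard scaling argument: reduce to a reference interval via an affine change of variable, use that the Hermite basis on the reference interval has universally bounded $L^\infty$ norm, and then track how each Hermite degree of freedom and each derivative reintroduces a power of $\tau$.

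First I would fix an arbitrary subinterval $(t_n,t_{n+1})$ of length $\tau$ and introduce the affine map $t = t_n + \tau s$ with $s \in (0,1)$, setting $\tilde{\bv}(s) := \bv(t_n + \tau s)$. Then $\tilde{\bv}$ is a polynomial of degree $q$ on the reference interval $(0,1)$, and the chain rule translates the Hermite data \eqref{hermlem} into
\begin{equation*}
 (\d_s)^k \tilde{\bv}(0) = \tau^k a_k, \qquad (\d_s)^k \tilde{\bv}(1) = \tau^k b_k.
\end{equation*}
Next I would expand $\tilde{\bv}$ in the Hermite basis $\{\phi_k,\psi_k\}$ on $(0,1)$, \ie fixed polynomials of degree $q$ determined by $(\d_s)^j\phi_k(0)=\delta_{jk}$, $(\d_s)^j\phi_k(1)=0$, and symmetrically for $\psi_k$. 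These reference basis functions and their derivatives are bounded by absolute constants depending only on $q$.

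Writing
\begin{equation*}
 \tilde{\bv}(s) = \sum_{k=0}^{d+1} \tau^k a_k\,\phi_k(s) + \sum_{k=0}^{r+1} \tau^k b_k\,\psi_k(s),
\end{equation*}
the hypothesis $|a_k|,|b_k| \leq L^k\cO(\tau^{q+1})$ immediately yields
\begin{equation*}
 \Norm{\tilde{\bv}}_{\leb{\infty}(0,1)} \leq \sum_{k=0}^{q} L^k \cO(\tau^{q+1+k}),
\end{equation*}
and since $\Norm{\bv}_{\leb{\infty}(t_n,t_{n+1})} = \Norm{\tilde{\bv}}_{\leb{\infty}(0,1)}$, taking the maximum over $n$ gives \eqref{eq:Hermite1}. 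For \eqref{eq:Hermite2} I would use $\d_t \bv(t) = \tau^{-1}\d_s \tilde{\bv}(s)$, so the factor $\tau^{-1}$ reduces the exponent in each term by one, producing $L^k\cO(\tau^{q+k})$.

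The estimate is essentially elementary, so there is no real obstacle; the only care needed is in the bookkeeping of the scaling exponents, in particular the fact that prescribing the $k$-th derivative at an endpoint contributes a factor $\tau^k$ (rather than $\tau^{-k}$) once one multiplies the reference basis function by the prescribed datum. This is also the reason the bounds come with $L^k$ rather than $L^{k}\tau^{-k}$, which matters in Section \ref{sec:claws} where $L$ and $1/\tau$ are of the same order.
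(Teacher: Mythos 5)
Your proof is correct, but it follows a genuinely different route from the paper. The paper works directly on the physical interval $(t_n,t_{n+1})$, writes the interpolant in Newton form with respect to the basis $\psi_i(t)=(t-t_n)^i$ (for $i\le d+1$) and $(t-t_n)^{d+1}(t-t_{n+1})^{i-d-1}$ (for $i\ge d+2$), and estimates the coefficients as divided differences: the one-point divided differences are bounded by $L^{j}\cO(\tau^{q+1})$ from the data, while each mixing step divides by $\tau$, and the recursion \eqref{dd2} is what produces the cascade of terms $L^k\cO(\tau^{q+1+k})$ after multiplying by $\Norm{\psi_i}_{\leb{\infty}}=\cO(\tau^i)$. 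You instead rescale to the reference interval and expand in the fixed Hermite cardinal basis, so the powers of $\tau$ enter only through the chain-rule scaling of the endpoint data, and no divided-difference bookkeeping is needed. Your argument is more elementary and in fact yields a slightly sharper bound (the sum terminates at $\max\{d+1,r+1\}$ rather than $q$), which is equivalent to the stated bound in the relevant regime $L\tau=\cO(1)$; it also preserves exactly the structure (each extra power of $L$ accompanied by an extra power of $\tau$) that Remark \ref{rem:apporder} relies on. One small point you share tacitly with the paper's proof: expanding in the cardinal basis (respectively, in the Newton form with exactly $q+1$ coefficients) presumes that the $(d+2)+(r+2)$ Hermite conditions determine the degree-$q$ polynomial on each interval, i.e., $q=d+r+3$ as in the setting of this section; it would be worth stating this explicitly, but it is not a gap relative to the paper.
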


\begin{remark}[Approximation order]\label{rem:apporder}
 Note that the order with respect to $\tau$ of the right hand side in \eqref{eq:Hermite1}  is the same in case $L$ is a constant independent of $\tau$ as in case 
 $L$ satisfies an estimate of the form $L \leq C \tau^{-1}$ with some $C>0$ independent of $\tau$.
 The same is true for the right hand side of \eqref{eq:Hermite2}.
 
 This shows that we may replace $\bfl^n_k(\bu^n), \bfl^{n+1}_k(\bu^{n+1})$ in \eqref{grec} by (sufficiently accurate) approximations
 $\tilde \bfl^n_k[\bu^n],\tilde \bfl^{n+1}_k[\bu^{n+1}]$ thereof without compromising the quality of the reconstruction.
 In particular, we need for the residual to be of order $q$ that
 \[ \norm{\bfl^m_k (\bu^m) - \tilde \bfl^m_k [\bu^m] } = \cO(\tau^{q+1-k}).\] 
We write  $\tilde \bfl^m_k [\bu^m]$ instead of $\tilde \bfl^m_k (\bu^m)$ to indicate that this quantity might not only depend on $\bu^m,$
but also on $\bu^{m-1}, \bu^{m-2}$ etc.
 We will exploit this observation in Section \ref{subs:approxf}.
\end{remark}

\begin{proof}[Proof of Lemma \ref{lem:Hermite}]
 The proof considers $\bv$ on each interval separately.
The functions $\{\psi_i\}_{i=0}^q$ with
\[ 
 \begin{split}
  \psi_i(t)&= (t-t_n)^i \quad \text{for } i=0,\dots,d+1\\
  \psi_i(t)&= (t-t_n)^{d+1} (t- t_{n+1})^{i - d-1} \quad \text{for } i=d+2,\dots,q
 \end{split}
 \]
form a basis of $\mathbb{P}_q((t_n,t_{n+1}),\mathbb{R})$ and obviously
\begin{equation}\label{psiiest}
 \Norm{ \psi_i}_{\leb{\infty} (t_n,t_{n+1})} = \cO(\tau^i); \quad
 \Norm{\d_t \psi_i}_{\leb{\infty} (t_n,t_{n+1})} =\cO(\tau^{i-1}).
\end{equation}

In this basis the coefficients of $\bv$ are given by divided differences, \ie 
\begin{equation}\label{lb}
 \bv|_{(t_n,t_{n+1})} = \sum_{i=0}^{d+1} \bar \bv [\underbrace{t_n, \dots, t_n}_{(i+1)-\text{times}}] \psi_i + \sum_{i=d+2}^{q} \bar \bv [\underbrace{t_n, \dots, t_n}_{(d+2)-\text{times}},\underbrace{t_{n+1}, \dots, t_{n+1}}_{(i-d-1)-\text{times}}] \psi_i
\end{equation}
with
\begin{equation}\label{dd1}
\begin{split}
 \bar \bv [\underbrace{t_n, \dots, t_n}_{i-\text{times}}] &= a_{i-1} \quad \text{for } i=1,\dots, d+2\\
 \bar \bv [\underbrace{t_{n+1}, \dots, t_{n+1}}_{j-\text{times}}] &= b_{j-1} \quad \text{for } j=1,\dots, r+2
  \end{split}
\end{equation}
and 
\begin{equation}\label{dd2}
  \bar \bv [\overbrace{t_n, \dots, t_n}^{i-\text{times}},\overbrace{t_{n+1}, \dots, t_{n+1}}^{j-\text{times}}] =
  \frac{ \bar \bv [\overbrace{t_n, \dots, t_n,}^{(i-1)-\text{times}}\overbrace{t_{n+1}, \dots, t_{n+1}}^{j-\text{times}}] - 
  \bar \bv [\overbrace{t_n, \dots, t_n,}^{i-\text{times}}\overbrace{t_{n+1}, \dots, t_{n+1}}^{(j-1)-\text{times}}]}{\tau}
  \end{equation}
  for  $i=1,\dots, d+2,\ j=1,\dots, r+2 .$
  
 In particular, \eqref{lb} shows that the coefficient of $\psi_i$ is a divided difference with $i+1$ arguments.
 Our assumptions on $a_k, b_k$ imply that divided differences containing only one argument $j$-times are of order
 $L^{j-1} \cO(\tau^{q+1}),$ in particular
 \begin{equation}\label{hermest1}
  \bar \bv [\underbrace{t_n, \dots, t_n}_{(j+1)-\text{times}}] \leq L^{j} \cO(\tau^{q+1}),
 \end{equation}
for $j=0, \dots, d+1$.
Moreover,  divided differences containing $j_1$-times $t_n$ and $j_2$-times $t_{n+1}$ are bounded by terms of the form
 \[ \sum_{l=1}^{\max\{j_1,j_2\}} L^{l-1} \cO(\tau^{q+1-j_1-j_2+l})\]
 due to our assumptions on $a_k, b_k;$ \eqref{dd1} and \eqref{dd2}.
Thus, shifting the summation index,
\begin{equation}\label{hermest2}
\bar \bv [\underbrace{t_n, \dots, t_n}_{(d+2)-\text{times}},\underbrace{t_{n+1}, \dots, t_{n+1}}_{(i-d-1)-\text{times}}] \leq \sum_{l=0}^{d+2} L^{l} \cO(\tau^{q-i+l+1}).
\end{equation}
We obtain the assertion of the Lemma by combining
\eqref{psiiest}, \eqref{hermest1} and \eqref{hermest2}.
\end{proof}

Combining \eqref{difrec}, \eqref{difrec1} and Lemma \ref{lem:Hermite} we obtain
\begin{corollary}[Bounds on residuals]\label{cor:r13}
 Let $\bR_1^n, \bR_3^n$ be defined as in \eqref{tres2}, then
 \begin{equation}
 \Norm{\bR_1^n}_{\leb{\infty} (t_n,t_{n+1})} \leq \sum_{k=0}^q L^k \cO(\tau^{q+k})
\end{equation}
and 
\begin{equation}
 \Norm{\bR_3^n}_{\leb{\infty} (t_n,t_{n+1})} \leq \sum_{k=0}^q L^{k+1} \cO(\tau^{q+1+k})
\end{equation}
for $n=0,\dots, N-1.$
\end{corollary}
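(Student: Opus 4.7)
The plan is to read this corollary as an immediate consequence of Lemma \ref{lem:Hermite} applied to the difference $\bv := \hu|_{(t_n,t_{n+1})} - \bh^n$, which lives in $\mathbb{P}_q((t_n,t_{n+1}),\mathbb{R}^m) \subset \fes_q$. The whole point is that the interpolation data for $\hu^n$ in Definition \ref{def:grec} and for $\bh^n$ in \eqref{herm} differ only at $t_{n+1}$ (since both use $\bfl_k^j$ evaluated at the corresponding numerical value at $t_n$, and $\bu^n = \tu^n(t_n)$), so the boundary conditions \eqref{difrec} for $\bv$ are particularly clean: zero on the left and of size $L^k \cO(\tau^{q+1})$ on the right by \eqref{difrec1}.

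First I would verify that $\bv$ satisfies the hypothesis of Lemma \ref{lem:Hermite} with $a_k = 0$ for $k=0,\dots,d+1$ and $b_k = \bfl_k^{n+1}(\bu^{n+1}) - \bfl_k^{n+1}(\tu^n(t_{n+1}))$ for $k=0,\dots,r+1$. The bound $a_k,b_k \leq L^k\cO(\tau^{q+1})$ is trivial for the $a_k$ and is exactly \eqref{difrec1} for the $b_k$; this in turn follows from the consistency assumption $\norm{\tu^n(t_{n+1}) - \bu^{n+1}} = \cO(\tau^{q+1})$ combined with the Lipschitz estimate \eqref{Lip} applied to $\bfl_k^{n+1}$ (which is a polynomial combination of derivatives of $\bfl$ of total order $k$, hence Lipschitz with constant $L^k$).

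Next I would invoke \eqref{eq:Hermite2} of Lemma \ref{lem:Hermite} to conclude
\begin{equation*}
\Norm{\bR_1^n}_{\leb{\infty}(t_n,t_{n+1})} = \Norm{\d_t \bv}_{\leb{\infty}(t_n,t_{n+1})} \leq \sum_{k=0}^q L^k \cO(\tau^{q+k}),
\end{equation*}
which is the first bound. For $\bR_3^n = -(\bfl(\hu|_{(t_n,t_{n+1})}) - \bfl(\bh^n))$, I would use \eqref{Lip} with $k=0,l=0$, giving $\norm{\bR_3^n} \leq L \Norm{\bv}_{\leb{\infty}(t_n,t_{n+1})}$, and then apply \eqref{eq:Hermite1} of Lemma \ref{lem:Hermite} to obtain
\begin{equation*}
\Norm{\bR_3^n}_{\leb{\infty}(t_n,t_{n+1})} \leq L \sum_{k=0}^q L^k \cO(\tau^{q+1+k}) = \sum_{k=0}^q L^{k+1}\cO(\tau^{q+1+k}),
\end{equation*}
which is exactly the second bound.

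The only genuinely nontrivial step is the verification that $\bfl_k^{n+1}$ is Lipschitz with constant controlled by $L^k$, so that the consistency error of order $\tau^{q+1}$ propagates correctly into the $b_k$. Everything else is bookkeeping on top of the already proven Lemma \ref{lem:Hermite}. I do not foresee any technical obstacle beyond being careful that the Lipschitz constants of the iterated expressions $\bfl_k$ (which involve $k$ nested derivatives and evaluations of $\bfl$) are indeed $\cO(L^k)$ under assumption \eqref{Lip}; this is where a short induction on $k$ using the product structure of $\bfl_k$ (cf.\ \eqref{fn2}) would be the cleanest argument.
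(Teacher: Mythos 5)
Your proposal is correct and follows essentially the same route as the paper: the corollary is obtained by applying Lemma \ref{lem:Hermite} to $\bv = \hu|_{(t_n,t_{n+1})} - \bh^n$ with the boundary data \eqref{difrec}, bounded via \eqref{difrec1}, using \eqref{eq:Hermite2} for $\bR_1^n$ and \eqref{eq:Hermite1} together with the Lipschitz bound \eqref{Lip} for $\bR_3^n$. Your extra care in checking that the iterated expressions $\bfl_k^{n+1}$ are Lipschitz with constant $\cO(L^k)$ is exactly the content the paper silently packages into \eqref{difrec1}.
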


Now we are in position to state the main result of this subsection:
\begin{theorem}[Optimality]\label{thrm:ode}
 Let $\hu\in \fes_q$ be the $H(p,d,r)$ reconstruction with $(p+1)(d+2)+r+1=q$ of the solution to a $q$-th order single- or multi-step method, approximating \eqref{ode}, defined in \eqref{grec}.
 Then, the residual $\bR$ defined in \eqref{tres} satisfies
 \begin{equation}
   \Norm{\bR}_{\leb{\infty} (0,T)} \leq \sum_{k=0}^{q+1} L^{k} \cO(\tau^{q+k}).
 \end{equation}

\end{theorem}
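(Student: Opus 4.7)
The plan is to use the decomposition $\bR^n = \bR_1^n + \bR_2^n + \bR_3^n + \bR_4^n$ from \eqref{tres2} and assemble the bounds already established in \eqref{Rest1} and Corollary \ref{cor:r13}. For the case $H(0,d,r)$ with $q = d+r+3$ covered explicitly in Subsection \ref{subs:oode}, this reduces to addition and index shifting: $\bR_2^n$ contributes $\cO(\tau^q)$, filling the $k=0$ slot; $\bR_4^n$ contributes $L\cO(\tau^{q+1})$, falling into the $k=1$ slot; the bound $\Norm{\bR_1^n}_{\leb{\infty}} \leq \sum_{k=0}^q L^k \cO(\tau^{q+k})$ covers the indices $k=0,\ldots,q$; and rewriting the bound $\Norm{\bR_3^n}_{\leb{\infty}} \leq \sum_{k=0}^q L^{k+1}\cO(\tau^{q+1+k})$ via a shift as $\sum_{k=1}^{q+1} L^k \cO(\tau^{q+k})$ supplies the $k=q+1$ slot. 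Summing, taking the supremum over $n \in \{0,\ldots,N-1\}$, and absorbing equal-order contributions into the Landau $\cO$ constants produces the claimed $\sum_{k=0}^{q+1} L^k \cO(\tau^{q+k})$.

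For the complementary case $H(p,0,0)$ with $q = 2(p+1)+1$, and more generally for $H(p,d,r)$ with $(p+1)(d+2)+r+1=q$, I would reproduce the auxiliary construction \eqref{auxivp}--\eqref{hermest} on the extended interval $(t_{n-p},t_{n+1})$. Concretely, $\tu^{n-p}$ would be the exact solution launched from $\bu^{n-p}$ and $\bh^n \in \mathbb{P}_q((t_{n-p},t_{n+1}),\mathbb{R}^m)$ the Hermite polynomial matching the first $d+1$ derivatives of $\tu^{n-p}$ at each of $t_{n-p},\ldots,t_n$ and the first $r+1$ derivatives at $t_{n+1}$. The analogue of \eqref{difrec1} requires $\norm{\bu^j - \tu^{n-p}(t_j)} = \cO(\tau^{q+1})$ at the intermediate nodes $j=n-p+1,\ldots,n+1$, which follows from $q$-th order consistency of the underlying method over the $\cO(\tau)$-long interval $[t_{n-p},t_j]$ together with the Lipschitz hypothesis \eqref{Lip} on $\bfl$ and its derivatives. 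The analogue of \eqref{difrec} is then immediate from the interpolation conditions defining $\hu^n$ and $\bh^n$.

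The main obstacle is reworking the Hermite stability estimate Lemma \ref{lem:Hermite} for interpolation data distributed over $p+2$ nodes rather than two. I would use a Newton-type basis $\psi_i$ of total degree $i$ built from nested factors $(t-t_j)^{\alpha_j}$, so that $\Norm{\psi_i}_{\leb{\infty}} = \cO(\tau^i)$ just as in \eqref{psiiest}. The coefficients are divided differences across the $p+2$ nodes, computed recursively in the spirit of \eqref{dd2}; each application of that recursion introduces one factor of $\tau^{-1}$, which is exactly matched by the corresponding drop in polynomial size. The delicate bookkeeping step is verifying that if the raw data satisfy $a_k, b_k \leq L^k \cO(\tau^{q+1})$, then a divided difference of total multiplicity $i+1$ across any node cluster is bounded by $\sum_l L^l \cO(\tau^{q+1-i+l})$, generalizing \eqref{hermest2}. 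Once this cancellation is verified, combining with the bound on $\psi_i$ and summing over $i=0,\ldots,q$ yields estimates for $\bR_1^n$ and $\bR_3^n$ of the same form as in Corollary \ref{cor:r13}, and the conclusion follows as in the first paragraph. Throughout, one must ensure that the index $k=q+1$ in the final sum appears precisely from combining the $L\cO(\tau^{q+1})$ part of $\bR_4^n$ with the tail of the $\bR_3^n$ estimate, so that no single contribution forces an even higher power of $L$.
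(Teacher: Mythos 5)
Your proposal is correct and follows essentially the same route as the paper: the proof there consists precisely of combining the decomposition \eqref{tres2} with the bounds \eqref{Rest1} and Corollary \ref{cor:r13}, exactly the bookkeeping you carry out in your first paragraph. Your additional sketch of the multi-node Hermite stability argument for $H(p,0,0)$-type reconstructions fills in what the paper dismisses with the remark that this case is ``analogous,'' and is consistent with the paper's argument (only a minor quibble: the $k=q+1$ term arises from the tail of the $\bR_3^n$ bound alone, not from $\bR_4^n$).
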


\begin{proof}
 The assertion of the Theorem follows upon combining \eqref{tres2}, \eqref{Rest1} and Corollary \ref{cor:r13}.
\end{proof}

\subsection{Approximation of $\bfl$ derivatives}\label{subs:approxf}
The expressions $\bfl^n_k$ used in \eqref{grec} might not be explicitly computable in many practical cases, \eg in case the right hand side stems from the spatial 
discretization of a system of hyperbolic conservation laws.
We have already observed in Remark \ref{rem:apporder} that we may replace $\bfl^n_k$ in \eqref{grec} by some approximation.
Obviously, for any $n$ there is only an interest in replacing $\bfl^n_k$ for $k \geq 2$ as $\bfl_0^n(\bu^n)=\bu^n$ and $\bfl_1^n(\bu^n)=\bfl(\bu^n)$ are readily computable and are
probably computed anyway by the method used for time integration.
Note that for $H(p,0,0)$ reconstructions only $\bfl_0^n, \bfl_1^n$ appear in \eqref{grec}.

As the required order of accuracy of the approximations and also the different values of $k$ for which $\bfl^n_k$ appears in \eqref{grec} depend on $q$ we will 
present different approximation approaches for different values of $q >3.$

\subsubsection{Directional derivatives}\label{subs:af:dd}
In time integration methods of order $q=4,5$ reconstructions of types $H(0,1,0)$ and $H(0,1,1)$ require
$\bfl^m_2(\bu^m)$ for $m =n,n+1$, where
\[\bfl^m_2(\bu^m) = \partial_t \bfl(t_m,\bu^m) + \operatorname{D} \bfl(t_m,\bu^m) \bfl(t_m,\bu^m),\]
see \eqref{fn2}
As we have seen in Remark \ref{rem:apporder} we may replace $\bfl^m_2 $ by $\tilde \bfl^m_2 $
as long as the error is of order $\cO(\tau^{q-1})$ such that $\cO(\tau^4)$ is admissible for $q=4,5.$
Such an error is achieved by
\begin{multline} \tilde \bfl_2^m[\bu^m] := \frac{ \bfl(t_m + \tau^2,\bu^m)-  \bfl(t_m - \tau^2,\bu^m)}{2\tau^2}\\
+\frac{ \bfl(t_m, \bu^m + \tau^2 \bfl(t_m,\bu^m))- \bfl(t_m, \bu^m - \tau^2 \bfl(t_m,\bu^m))}{2\tau^2} \end{multline}
for $m=n, n+1.$
Note that for computing $\tilde \bfl_2^{n+1}[\bu^{n+1}]$ four additional
$\bfl$ evaluations (two if $\bfl$ does not depend on $t$) are 
required as the time integration scheme computes $ \bfl(t_{n+1},\bu^{n+1}).$
The value $\tilde \bfl_2^{n+1}[\bu^{n+1}]$ computed for use at the right boundary in the time step from $t_n$ to $t_{n+1}$ can be reused in the next time step.
Thus, this reconstruction approach requires four (two) additional $\bfl$ evaluations per time step.

\subsubsection{Finite differences}\label{subs:af:fd}
Another way to obtain approximations of $\bfl_k^{n+1}(\bu^{n+1})$ is to use finite difference 
approximations of $d_t^{k-1} \bfl(t,\tilde \bu(t))|_{t_{n+1}}$ in which $\tilde \bu$ is some function interpolating $(t_{n-m}, \bu^{n-m}), \dots, (t_n, \bu^n), (t_{n+1},\bu^{n+1})$ for a sufficient number of points.
It is preferable to choose those interpolated points as points which do not lie in the future of the point $t_{n+1}$ as this avoids the need to compute $\bfl$ evaluations before they are needed by the time integration scheme.

To be precise, let us elaborate the approach for $q=4,5$. 
We use the following backward finite difference stencil for first order derivatives
\begin{equation}\label{def:bfd}
\tilde \bfl^{n+1}_2[\bu^{n+1}]:=
\frac{\frac{25}{12} \bfl_{n+1} - 4 \bfl_n + 3\bfl_{n-1} - \frac{4}{3}\bfl_{n-2} + \frac{1}{4}\bfl_{n-3}}{\tau} \quad \text{ for } n \geq 3,
\end{equation}
where we used the abbreviation
\begin{equation}
\bfl_m := \bfl(t_m, \bu^m).
\end{equation}

This approach does not require any additional $\bfl$ evaluations but creates a need 
to store $3$ previous $\bfl$ evaluations.
While this creates (nearly) no additional overhead in multi-step schemes, it is a certain overhead in one-step schemes like Runge-Kutta methods.
The formula \eqref{def:bfd} does not allow for the computation of $\tilde \bfl^{m}_2[\bu^{m}]$
for $m=0,\dots,3$
so that a computation of our reconstruction on the first four time steps is not possible.
However after performing the first six time steps we may compute  $\tilde \bfl_2^{0}[\bu^{0}], \dots, \tilde \bfl_2^{3}[\bu^{3}]$ using forward and central finite difference stencils.

Of course higher order finite-difference schemes can be used for the first
and higher order derivatives. For example using the $6th$ order one sided
differences schemes for the first derivative and the $5th$ order scheme for
the second derivative described in \cite{For88} allows for a reconstruction
with $q=7$. Using one sided differences makes it possible to reuse one 
approximation from one time step to the next. These methods require to
store $\bfl_{n-6},\dots,\bfl_{n-1}$.
Note that the procedures discussed in \cite{For88} also allow us to construct higher order finite difference stencils possibly including time steps of different lengths.
However, for each combination of lengths of intervals a new stencil needs to be derived.

\begin{remark}[Comparison of storage demands]\label{rem:stor}
 In case $H(0,d,d)$ methods use the finite difference stencils described above they do not need any additional $\bfl$ evaluations,
 but require the storage of $\bfl$ evaluations from previous time steps.
 In this sense they become comparable to $H(p,0,0)$ schemes, and we may compare the storage demands of both schemes.
In case of $q=5$ we may use $H(0,1,1)$ requiring the storage of $3$ previous $\bfl$ evaluations or $H(1,0,0)$ requiring the storage of $1$ previous $\bfl$ evaluation.
In case of $q=7$ we may use $H(0,2,2)$ requiring the storage of $6$ previous $\bfl$ evaluations or $H(2,0,0)$ requiring the storage of $2$ previous $\bfl$ evaluations.

From this perspective it seems that $H(p,0,0)$ schemes are more efficient than $H(0,d,d)$ schemes.
\end{remark}

\begin{remark}[Hermite-Birkhoff interpolation]
As a final remark we note that Hermite-Birkhoff interpolation could be used as
well. As an example the reconstruction suggested in \cite{BKM12}, is similar to the approach presented in the work at hand. 
That reconstruction corresponds to fixing $\hu \in \mathbb{V}_3$ by prescribing
three interpolation conditions for
\begin{equation}\label{eq:agrec}
 \hu^n (t_n)  ,\;
 \d^2_t \hu^n(t_{n+\frac{1}{2}}) ,\;
 \hu^n (t_{n+1})~.
\end{equation}
While the values at the end points are readily available some approximation
is needed for the value of the second derivative at the interval midpoint.
For this condition the following approximation is used
$$ \frac{\d}{\d t} \bfl(t_{n+\tfrac{1}{2}},\bu(t_{n+\frac{1}{2}})) \approx
    \frac{1}{2\tau}\big( \bfl(t_{n+1},\bu^{n+1}) - \bfl(t_n,\bu^{n}) \big) $$
resulting in an optimal scheme for $q=2$. 
A similar approximation order could be obtained by considering our reconstruction framework for the choice
$H(0,0,-1)$ where the $-1$ should indicate that we prescribe the value but not any derivative of $\hu$ at $t_{n+1}.$
Further Hermite-Birkhoff
reconstructions for special Runge-Kutta methods are derived in
\cite{EJNT86,Hig91}.
\end{remark}



\section{Estimates for fully discrete schemes for conservation laws}\label{sec:claws}

Let us consider  a spatially one dimensional, hyperbolic  system of $m\in \mathbb{N}$  conservation laws
on the flat one dimensional torus $\mathbb{T}$ complemented with initial
data $\bu_0 : \mathbb{T} \rightarrow \mathcal{U} \subset \mathbb{R}^m$
where the state space $\mathcal{U}$ is an open set:
\begin{equation}\label{claw}
 \partial_t \bu + \partial_x {\bf g}(\bu) =0 \text{ on } (0,T) \times \mathbb{T}, \quad \bu(0,\cdot) = \bu_0 \ \text{ on } \mathbb{T}.
\end{equation}
We assume the flux function ${\bf g}$ to be in $C^2(\mathcal{U}, \mathbb{R}^m).$

We restrict ourselves, to the case that \eqref{claw} is endowed with a strictly convex entropy, entropy flux pair, \ie    
 there exists a strictly convex $\eta \in C^1(\mathcal{U}, \mathbb{R})$ and $q \in C^1(\mathcal{U}, \mathbb{R})$ satisfying
\begin{equation}
 (\operatorname{D} \eta) \operatorname{D} {\bf g} = \operatorname{D} q.
\end{equation}

It is straightforward to check that any classical solution $\bu$ to \eqref{claw} satisfies the companion conservation law
\begin{equation}\label{claw2}
  \partial_t \eta(\bu) + \partial_x q(\bu) =0.
\end{equation}

Classically the study of weak solutions to \eqref{claw} is  restricted to so-called entropy solutions, see \cite[e.g.]{Daf10}.
\begin{definition}[Entropy solution]
 A weak solution $\bu \in \leb{\infty}((0,T) \times \mathbb{T},\mathcal{U})$ to \eqref{claw} is called an {\bf entropy solution}
 with respect to $(\eta,q),$ 
 if it weakly satisfies
 \begin{equation}\label{ei}
  \partial_t \eta(\bu) + \partial_x q(\bu) \leq0.
\end{equation}
\end{definition}

It was believed for a long time that entropy solutions are unique. 
While this is true for entropy solutions to scalar problems (satisfying a more discriminating entropy condition),
at least in multiple space dimensions entropy solutions to e.g. the Euler equations are not unique \cite{DS10,Chi14}.

While it is not entirely clear whether entropy solutions to (general) systems in one space dimension with generic initial data are unique 
it is well known that the entropy inequality \eqref{ei} gives rise to some stability results, which in particular imply
weak-strong-uniqueness, see \cite{Dip79,Daf79}.

\subsection{Reconstruction for fully discrete DG schemes}
In the sequel we study fully discrete schemes approximating \eqref{claw} employing a method of lines approach.
We assume that the spatial discretization is done using a DG method
with $q$-th order polynomials and that the temporal discretization is based
on some single- or multi-step method of order $r.$

We use decompositions  $-1= x_0 < x_1 < \dots < x_{M-1} < x_M=1$ of the spatial domain and $0= t_0 < t_1 < \dots < t_{N-1} < t_N=T$
of the temporal domain.
In order to account for the periodic boundary conditions we identify $x_0$ and $x_M.$
We define time steps $\tau_n:= t_{n+1}-t_n,$ a maximal time step $\tau:= \max_n \tau_n$,
spatial mesh sizes $h_{k+\frac{1}{2}}:= x_{k+1}-x_k,$ $h_k := (h_{k+\frac{1}{2}} + h_{k-\frac{1}{2}} )/2$ and a maximal and minimal spatial step
\[ h:= \max_k h_{k+\frac{1}{2}}, \quad h_{\min} := \min_k h_{k+\frac{1}{2}}\] 
where we assume that $\frac{h}{h_{\min}}$ is bounded for $h\to 0$.
We will write $\int_{\mathcal{T}}$ instead of $\sum_{i=1}^{M} \int_{x_{i-1}}^{x_{i}}.$

Let us introduce the piece-wise polynomial DG ansatz and test space:
\begin{equation}\label{def:dgs}
 \fes_q^s := \{ {\bf w} : [x_0,x_M]\rightarrow \mathbb{R}^m \, : \,{\bf w}|_{(x_{i-1},x_{i})} 
 \in \mathbb{P}_q((x_{i-1},x_{i}),\mathbb{R}^m) \text{ for }  1 \leq i \leq M\}
 \end{equation}
Then, the fully discrete scheme results as a single- or multi-step discretization of the semi-discrete scheme 
\begin{equation}\label{sde}
 \partial_t \bu_h = -{\boldsymbol f} (\bu_h)
\end{equation}
where the (nonlinear) map ${\boldsymbol f}: \fes_q^s \rightarrow \fes_q^s$ is defined by requiring that for all ${\bv}_h, {\boldsymbol \psi} \in \fes^s_q$ it holds
\begin{equation}\label{dgscheme}
 \int_\mathbb{T} {\boldsymbol f}({\bf v}_h) {\boldsymbol \psi}\d x = - \int_{\mathcal{T}}{\bf g}({\bf v}_h) \partial_x {\boldsymbol \psi} \d x 
 + \sum_{i=0}^{M-1} {\bf G}({\bf v}_h(x_i^-),{\bf v}_h(x_i^+)) \jump{{\boldsymbol \psi}}_i,
\end{equation}
 where $\bG:\mathcal{U} \times \mathcal{U}
 \rightarrow \mathbb{R}^m$ is a numerical flux function, $\jump{{\boldsymbol \psi}}_i := {\boldsymbol \psi}(x_i^-)-
 {\boldsymbol \psi}(x_i^+)$ are jumps and
  the notation for spatial traces is analogous to that for temporal traces, see \eqref{def:traces}.
We will specify our assumptions on the numerical flux in Assumption \ref{ass1}.

Suppose the fully discrete numerical scheme allows us to compute a sequence of approximate solutions at  points $\{t_n\}_{n=0}^N$ in time: $\bu_h^0, \bu_h^1, \bu_h^2,\dots, \bu_h^N \in \fes^s_q.$
In order to make sense of its reconstruction, we define for any vector space $V$ a space of piece-wise polynomials in time by
\begin{equation}\label{tspace}
  \fes_{r}^{t}(0,T; V) := \{ {\bf w} : [0,T] \rightarrow V \, :
  \,{\bf w}|_{ (t_n,t_{n+1})} \in \mathbb{P}_r((t_n,t_{n+1}),V)\}.
\end{equation}

Using the methodology from Section \ref{grks} we obtain a computable reconstruction
$\ter \in \fes_{r}^{t}(0,T; \fes_q^s).$

\begin{assumption}[Bounded reconstruction]\label{ass2}
 For the remainder of this section we will suppose that there is some compact and convex $\mathfrak{K} \subset \mathcal{U}$ such that
 \[ \ter(t,x) \in \mathfrak{K} \quad \forall (t,x) \in [0,T] \times \mathbb{T}.\]
\end{assumption}

\begin{remark}[Bounded reconstruction]
 Note that Assumption \ref{ass2} is verifiable in an a posteriori fashion, since $\ter$ is explicitly computable.
 It is, however, not sufficient to verify $\bu_h^n(x) \in \mathfrak{K}$ for all $n = 0, \dots, N$ and all $x \in \mathbb{T}.$
\end{remark}

\begin{remark}[Bounds on flux and entropy]
 Due to the regularity of $\bg$ and $\eta$ and the compactness of $\mathfrak{K}$ there exist
  constants $0 < \Cfu < \infty$ and $0< \Cetad < \Cetau < \infty,$  which can be explicitly computed from $\mathfrak{K}$, $\bg$ and $\eta,$
  such
  that
  \begin{equation}
    \label{eq:consts}
\norm{{\bv}^T \operatorname{H} {\bf g}(\bu) \bv} 
    \leq \Cfu \norm{\bv}^2,    
    \quad 
 \Cetad 
    \norm{\bv}^2
    \leq 
   {\bv}^T
    \operatorname{H} \eta(\bu)
    \bv
    \leq \Cetau \norm{\bv}^2
 \ \forall \ \bv \in \mathbb{R}^m, \bu \in \mathfrak{K},
  \end{equation}
  where $\norm{\cdot}$ is the Euclidean norm for vectors and $\operatorname{H}$ 
  denotes Hessian matrices.
\end{remark}

We can now define as in the previous section the temporal residual
\begin{equation}\label{sder}
{\bf R}^t := \partial_t \ter + {\boldsymbol f} (\ter) 
\end{equation}
with ${\bf R}^t \in \leb{2}((0,T);\fes^s_q).$ As $\ter$ is explicitly computable, so is ${\bf R}^t.$
Note that for $r\geq 3$ we even have ${\bf R}^t \in C^0((0,T);\fes^s_q).$

Our spatial reconstruction of $\ter$ is based on \cite{GMP_15}.
To this end we restrict ourselves to two types of numerical fluxes:

\begin{assumption}[Condition on the numerical flux]\label{ass1}
We assume that there exists a locally Lipschitz continuous function
$\bw: \mathcal{U} \times \mathcal{U} \rightarrow \mathcal{U}$ 
such that for any compact $K \subset \mathcal{U}$ there exists a constant $C_w(K) >0$
with
 \begin{equation}\label{w-cond}
  |\bw({\bf a}, {\bf b}) - {\bf a} | + |\bw({\bf a}, {\bf b}) - {\bf b} | \leq 
     C_w(K)|{\bf a} - {\bf b}| \quad \forall \ {\bf a}, {\bf b} \in K.
 \end{equation}
With this function the numerical flux $\bG$ is of one of the two following types:
 \begin{itemize}
  \item[(i)] 
 $
  \bG({\bf a}, {\bf b}) =\bg(\bw({\bf a}, {\bf b})) \quad \forall \ {\bf a}, {\bf b} \in \mathcal{U};
 $
 \item [(ii)] 
 $
  \bG({\bf a}, {\bf b}) =\bg(\bw({\bf a}, {\bf b}))  - \mu({\bf a},{\bf b};h) 
                         h^\nu ({\bf b} - {\bf a}) \quad \forall \ {\bf a}, {\bf b} \in \mathcal{U}
 $\\
 for some $\nu \in \mathbb{N}_0$ and matrix-valued function $\mu$ for which for any
 compact $K \subset \mathcal{U}$ there exists a constant $\mu_K > 0$ so that
   $|\mu({\bf a},{\bf b};h)| \leq \mu_K\left(1+\frac{|{\bf b}-{\bf a}|}{h}\right)$
   for $h$ small enough.
  \end{itemize}

\end{assumption}

\begin{remark}[Restrictions on the numerical flux] \label{rem:fluxes}
 \begin{enumerate}
  \item The condition imposed in  Assumption \ref{ass1} is stronger than the classical Lipschitz and consistency conditions.
  \item The conditions do not, by any means, guarantee stability of the numerical scheme. 
  In practical computations, interest focuses on numerical fluxes which satisfy one of the assumptions and lead to a stable numerical scheme, for obvious reason.
  \item The Lax-Wendroff and Richtmyer numerical fluxes, e.g.,  
     \begin{equation}\label{def:LW}
       \bG({\bf a},{\bf b})= \bg(\bw({\bf a},{\bf b})), 
         \quad \bw({\bf a},{\bf b})= \frac{{\bf a} + {\bf b}}{2} - \frac{\lambda}{2} ( \bg({\bf b}) - \bg({\bf a})),
     \end{equation}
    satisfy Assumption \ref{ass1} (i).
  \item The Lax Friedrichs flux 
    \begin{equation}\label{def:LLF}
      \bG({\bf a}, {\bf b}) =\frac{1}{2} \Big(
          \bg({\bf a}) + \bg({\bf b}) \Big)  - \lambda ({\bf b} - {\bf a})
    \end{equation}
    satisfies Assumption \ref{ass1} (ii) with 
    $\bw({\bf a},{\bf b})=\frac{1}{2}({\bf a}+{\bf b})$,
    $\nu=0$, and
    $$ \mu({\bf a},{\bf b},h)=\frac{\bg({\bf a}) - 2\bg(\bw({\bf a},{\bf b})) + \bg({\bf b})}
                     {2\|{\bf b}-{\bf a}\|^2} \otimes ({\bf b}-{\bf a}) - \lambda \mathbb{I}~,$$
                     where $\mathbb{I}$ denotes the $m \times m$ identity matrix.
  \item A Lax-Wendroff type flux with artificial viscosity as described for
    example in \cite[Ch. 16]{LeV02} satisfies Assumption \ref{ass1} (ii) with $\nu=1$. 
    In fact, the analysis presented in the following applies to a large class of general
    artificial viscosity methods typically used with Lax-Wendroff type fluxes, e.g., a discrete form of 
    $h^2\partial_x\big(|\partial_x u|\partial_x u\big),$ see
    \cite[e.g.]{Lapidus:1967:ArtVisc,Reisner:2013:ArtVisc}. In the numerical examples we
    will use a discrete version of this viscosity given by $\nu=1,\mu({\bf a},{\bf b};h)=\mu_0|{\bf b}-{\bf a}|/h$.
  \item The assumptions given here cover to a certain extent methods combining
    central with lower order upwind type fluxes, e.g.,
    $$ \bG({\bf a},{\bf b})= (1-\phi(h))\bg(\bw({\bf a},{\bf b})) + 
    \phi(h) \Big(\frac{1}{2}\big(\bg({\bf a})+\bg({\bf b})\big) - \lambda ({\bf b}-{\bf a})\Big) $$
    as long as $\phi(h)=O(h)$ for smooth solutions
    \cite[e.g.]{Toro:2000:FLIC,Qiu:2006:DGFLUXES}. To see this take $\nu=1$
    and 
    $$ \mu({\bf a},{\bf b};h) = \frac{\phi(h)}{2h} 
    \frac{\bg({\bf a})-2\bg(\bw({\bf a},{\bf b}))+\bg({\bf b})}
           {\|{\bf b}-{\bf a}\|^2}\otimes ({\bf b}-{\bf a}) - \lambda \mathbb{I}
    $$
 \end{enumerate}
\end{remark}

The spatial reconstruction approach is applied to $\ter(t,\cdot)$ for each
$t \in (0,T$ using the function $\bw$ to obtain a continuous
reconstruction $\str$:
\begin{definition}[Space-time reconstruction]\label{def:str}
Let $\ter$ be the temporal reconstruction as in Definition \ref{def:grec}
of a sequence $\{ \bu_h^n\}_{n=0}^N \subset \fes_q^s$ computed from a method of lines scheme consisting of a DG discretization in space 
using a numerical flux satisfying Assumption \ref{ass1}
and some single- or multi-step method in time.
Then, the space-time reconstruction
$\str(t,\cdot) \in \fes_{q+1}^s$ is defined by requiring
\begin{equation}\label{srec}
\begin{split}
 \int_{\mathbb{T}} (\str(t,\cdot) - \ter(t,\cdot)) \cdot {\boldsymbol \psi} &= 0 \quad \forall {\boldsymbol \psi} \in \fes_{q-1}^s\\
 \str(t, x_k^\pm)&= \bw(\ter(t,x_k^-),\ter(t,x_k^+))\quad \forall k.
 \end{split}
\end{equation}
\end{definition}

\begin{remark}[Choice of reconstruction]
\begin{enumerate}
\item We believe this reconstruction to be the most meaningful choice for general systems and fluxes as in Assumption \ref{ass1} (i) and (ii).
We will see in Lemma \ref{lem:compest3} that this choice may lead to a suboptimal residual in case $\nu=0.$
We cannot rule out the possibility that another reconstruction choice exists which leads to an estimator of optimal order for this type of flux also in the case $\nu=0.$
\item We will see in Section \ref{subs:LFs} that for strictly hyperbolic
  systems it is indeed possible to obtain an estimator of optimal order
  with a Roe type flux. But how to extend this construction to more
  general fluxes. We will
  for example demonstrate the difficulties in the case of a Lax-Friedrichs
  type flux.
\end{enumerate}
\end{remark}

\begin{lemma}[Properties of space-time reconstruction]
 Let  $\str$ be the spatial reconstruction defined by \eqref{srec}. Then, for each $t \in [0,T]$
 the function $\str(t,\cdot)$ is well-defined and locally computable.
 Moreover,
 \[ \str \in \sob{1}{\infty}(0,T;\fes_{q+1}^s \cap C^0(\mathbb{T})).\]
 As $\str$ is piece-wise polynomial and continuous in space it is also Lipschitz continuous in space.
\end{lemma}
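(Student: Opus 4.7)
The plan is to verify the three assertions cell by cell, since \eqref{srec} is local: on each spatial cell $(x_{i-1},x_i)$ the defining conditions involve only data living on that cell (interior moments of $\ter$) and at its two endpoints (the traces of $\ter$ entering $\bw$), so if each cell problem is uniquely solvable, local computability and hence global well-definedness follow automatically.

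For well-posedness on a single cell $[a,b]=[x_{i-1},x_i]$, I would count dimensions: the unknown $\str|_{(a,b)}$ lies in $\mathbb{P}_{q+1}((a,b),\mathbb{R}^m)$, which has $(q+2)m$ degrees of freedom, while \eqref{srec} prescribes $qm$ moment conditions (testing against a basis of $\mathbb{P}_{q-1}$) plus $2m$ boundary conditions, matching exactly. Invertibility is equivalent to triviality of the kernel, which I would prove as follows: if $p\in\mathbb{P}_{q+1}([a,b],\mathbb{R}^m)$ satisfies $p(a)=p(b)=0$ and is $L^2$-orthogonal to $\mathbb{P}_{q-1}$, write componentwise $p(x)=(x-a)(x-b)\tilde p(x)$ with $\tilde p \in \mathbb{P}_{q-1}$. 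Taking $\tilde p$ itself as test function in the orthogonality condition yields
\begin{equation*}
0=\int_a^b (x-a)(x-b)\,|\tilde p(x)|^2\,\d x,
\end{equation*}
and since $(x-a)(x-b)<0$ on $(a,b)$ this forces $\tilde p\equiv 0$, hence $p\equiv 0$. This is the main (and essentially only) non-routine step; the rest is organisational.

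For the time regularity I would view the cell-wise construction as solving a linear system $A\,\hat{\bf c}(t)=\bR(\ter(t,\cdot))$ whose matrix $A$ is time-independent and invertible by the previous step. The right-hand side $\bR$ is composed of (i) linear moment functionals of $\ter|_{\mathrm{cell}}$ and (ii) the boundary evaluations $\bw(\ter(t,x_k^-),\ter(t,x_k^+))$. The temporal reconstruction $\ter$ lies in $\sob{1}{\infty}(0,T;\fes_q^s)$ by the lemma in Section~\ref{grks}, takes values in the compact set $\mathfrak{K}$ by Assumption~\ref{ass2}, and $\bw$ is locally Lipschitz; thus $\bR(\ter(\cdot,\cdot))$ is $\sob{1}{\infty}$ in time and so is $\str$, with values in the finite-dimensional space $\fes_{q+1}^s$.

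Finally, continuity in space is built into \eqref{srec}: at every interior node $x_k$ and at the periodic identification $x_0\sim x_M$, both one-sided traces equal $\bw(\ter(t,x_k^-),\ter(t,x_k^+))$, so $\str(t,\cdot)\in C^0(\mathbb{T})$. On each cell $\str(t,\cdot)$ is a polynomial on a compact interval, hence Lipschitz; continuity across the finitely many interfaces then upgrades this to a global Lipschitz bound, completing the proof. The only genuinely delicate point is the kernel computation above; everything else is bookkeeping that leverages the finite dimensionality of the target spaces and the compactness afforded by Assumption~\ref{ass2}.
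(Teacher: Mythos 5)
Your proposal is correct, and in substance it follows the same route as the paper; the only difference is that the paper disposes of the first part of the claim (well-definedness, local computability, spatial continuity of $\str(t,\cdot)$) by citing \cite[Lem.~4.3]{GMP_15}, whereas you reprove it from scratch. Your cell-wise dimension count ($(q+2)m$ unknowns against $qm$ moment conditions plus $2m$ endpoint conditions) together with the kernel argument via the factorization $p(x)=(x-a)(x-b)\tilde p(x)$, $\tilde p\in\mathbb{P}_{q-1}$, and the sign of $(x-a)(x-b)$ on $(a,b)$ is exactly the standard argument behind that cited lemma, so your write-up makes the statement self-contained where the paper delegates. For the time regularity your reasoning coincides with the paper's: you view the cell problems as a fixed invertible linear system whose data consist of linear moments of $\ter$ and of $\bw$ evaluated at traces of $\ter$, and you use that $\ter\in\sob{1}{\infty}$ in time with values in the compact set $\mathfrak{K}$ of Assumption~\ref{ass2} together with the local Lipschitz continuity of $\bw$ from Assumption~\ref{ass1}; the paper phrases this as ``the Lipschitz continuity of $\ter$ translates into Lipschitz continuity of $\str$ in time.'' The closing observation that spatial continuity is built into \eqref{srec} at interior nodes and at the periodic identification, and that a continuous piecewise polynomial on finitely many cells is globally Lipschitz, matches the final assertion of the lemma. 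No gaps.
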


\begin{proof}
 The facts that for each $t \in [0,T]$ the function
 $\str(t,\cdot)$ is well-defined, locally computable and continuous follow from \cite[Lem. 4.3]{GMP_15}.
 Assumptions \ref{ass1} and \ref{ass2} imply that $\bw$ is Lipschitz on the set of values taken by $\ter.$
 Thus, the Lipschitz continuity of $\ter$ translates into Lipschitz continuity of $\str$ in time.
\end{proof}


Since $\str$ is computable and Lipschitz continuous in space and time we may define
a computable residual
\begin{equation}\label{pclaw}
\bR^{st} :=  \partial_t \str + \partial_x {\bf g}(\str)
\end{equation}
with $\bR^{st} \in \leb{2}((0,T)\times \mathbb{T}, \mathbb{R}^m).$

We can now formulate our main a posteriori estimate:

\begin{theorem}
[A posteriori error bound]\label{The:re}
  Let $\bg \in C^2(\mathcal{U},\mathbb{R}^m)$  
  and let $\bu$ be an entropy solution  of
  \eqref{claw} with periodic boundary conditions.
   Let $\str$ be the space-time reconstruction of a fully discrete DG scheme
   $\{\bu_h^n\}_{n=0}^N$ defined according to Definition \ref{def:str}.
  Provided $\bu$ takes only values in $\mathfrak{K}$, then for $n=0,\dots,N$ the error between the numerical solution $\bu_h^n$ and $\bu(t_n,\cdot)$
  satisfies
  \begin{equation}\label{eq:there}
    \begin{split}
      \Norm{\bu(t_n,\cdot) - \bu_h^n}_{\leb{2}(\mathbb{T})}^2
      &\leq 
      2\Norm{\str (t_n,\cdot) - \bu_h^n}_{\leb{2}(\mathbb{T})}^2
      \\
      &\qquad
      +
      2\Cetad^{-1}
      \Big( 
      \Norm{\bR^{st}}_{\leb{2}((0,t_n)\times \mathbb{T})}^2 
      +
      \Cetau \Norm{\bu_0 - \str(0,\cdot)}_{\leb{2}(\mathbb{T})}^2 
      \Big)  
      \\
      &\qquad\qquad \times
      \exp\left(
        \int_0^{t_n}
        \frac{\Cetau \Cfu \Norm{\pdx \str(s,\cdot)}_{\leb{\infty}(\mathbb{T})} 
          + \Cetau^2}{\Cetad}  
        \d s
      \right)
      .    
    \end{split}
  \end{equation}
\end{theorem}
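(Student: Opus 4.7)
\medskip

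\noindent\textbf{Proof plan.}
The backbone of the argument is the DiPerna--Dafermos relative entropy / weak--strong stability framework, applied with $\str$ playing the role of the smooth competitor. I would first split the total error by the elementary inequality $(a+b)^2\leq 2a^2+2b^2$:
\[
\Norm{\bu(t_n,\cdot)-\bu_h^n}_{\leb{2}(\mathbb{T})}^2
\leq 2\Norm{\str(t_n,\cdot)-\bu_h^n}_{\leb{2}(\mathbb{T})}^2
+2\Norm{\bu(t_n,\cdot)-\str(t_n,\cdot)}_{\leb{2}(\mathbb{T})}^2.
\]
The first summand already appears in the statement, so the whole task is to control $\Norm{\bu(t_n,\cdot)-\str(t_n,\cdot)}_{\leb{2}(\mathbb{T})}$ in terms of $\bR^{st}$ and the initial data mismatch $\bu_0-\str(0,\cdot)$.

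For this I would introduce the relative entropy
\[
\eta(\bu|\str):=\eta(\bu)-\eta(\str)-\operatorname{D}\eta(\str)(\bu-\str),
\]
together with its associated flux $q(\bu|\str):=q(\bu)-q(\str)-\operatorname{D}\eta(\str)(\bg(\bu)-\bg(\str))$, and the energy $E(t):=\int_{\mathbb{T}}\eta(\bu(t,\cdot)|\str(t,\cdot))\,\d x$. By Taylor's theorem and Assumption \ref{ass2} with the bounds \eqref{eq:consts}, one has the two-sided coercivity
\[
\tfrac{\Cetad}{2}\norm{\bu-\str}^2\leq \eta(\bu|\str)\leq \tfrac{\Cetau}{2}\norm{\bu-\str}^2,
\]
so that bounding $E(t_n)$ is equivalent (up to the constants $\Cetad,\Cetau$) to bounding the $\leb{2}$ error we want.

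The main computation is the differential inequality for $E$. Using the entropy inequality \eqref{ei} for $\bu$ (in the sense of distributions, with $\operatorname{D}\eta(\str)$ serving as a Lipschitz test function in space-time, which is admissible since $\str\in \sob{1}{\infty}$) together with the identity $\partial_t\str+\partial_x\bg(\str)=\bR^{st}$, a standard manipulation yields
\[
\frac{\d}{\d t}E(t)\leq
-\int_{\mathbb{T}}\partial_x(\operatorname{D}\eta(\str))\cdot\bigl(\bg(\bu)-\bg(\str)-\operatorname{D}\bg(\str)(\bu-\str)\bigr)\,\d x
-\int_{\mathbb{T}}\operatorname{H}\eta(\str)(\bu-\str)\cdot \bR^{st}\,\d x.
\]
For the first term I would invoke the quadratic remainder estimate $\norm{\bg(\bu)-\bg(\str)-\operatorname{D}\bg(\str)(\bu-\str)}\leq \tfrac{\Cfu}{2}\norm{\bu-\str}^2$ coming from the Hessian bound in \eqref{eq:consts}, and use $\norm{\partial_x(\operatorname{D}\eta(\str))}\leq \Cetau\norm{\partial_x\str}$, so this term is bounded by $\tfrac{\Cetau\Cfu}{\Cetad}\Norm{\partial_x\str}_{\leb{\infty}(\mathbb{T})}E(t)$. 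The residual term I would handle by Young's inequality in the form $\Cetau ab\leq \tfrac{1}{2}(\Cetau^2 a^2+b^2)$, producing the contribution $\tfrac{\Cetau^2}{\Cetad}E(t)+\tfrac{1}{2}\Norm{\bR^{st}(t,\cdot)}_{\leb{2}(\mathbb{T})}^2$.

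Combining these into a single Gronwall inequality $\tfrac{\d}{\d t}E\leq A(t)E+\tfrac{1}{2}\Norm{\bR^{st}(t,\cdot)}_{\leb{2}}^2$ with
\[
A(t)=\tfrac{\Cetau\Cfu\Norm{\partial_x\str(t,\cdot)}_{\leb{\infty}}+\Cetau^2}{\Cetad},
\]
integrating from $0$ to $t_n$ with $E(0)\leq \tfrac{\Cetau}{2}\Norm{\bu_0-\str(0,\cdot)}_{\leb{2}}^2$, and finally using the lower coercivity $\tfrac{\Cetad}{2}\Norm{\bu(t_n,\cdot)-\str(t_n,\cdot)}_{\leb{2}}^2\leq E(t_n)$ produces exactly \eqref{eq:there}. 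The one step I expect to be delicate is justifying the relative entropy identity when $\bu$ is only a distributional entropy solution: one has to use a Kru\v{z}kov-style doubling or a mollification in time of $\operatorname{D}\eta(\str)$ so that products like $\operatorname{D}\eta(\str)\partial_t\bu$ can be interpreted rigorously; the needed regularity of $\str$ (Lipschitz in space-time, values in the convex set $\mathfrak{K}$) is exactly what makes this admissible.
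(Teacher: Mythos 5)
Your proposal is correct and follows essentially the same route as the paper: the paper's proof simply invokes the relative entropy (weak--strong stability) argument of \cite[Thrm.~5.5]{GMP_15} with $\str$ as the Lipschitz competitor and $\bR^{st}$ as the perturbation, which is exactly the Gronwall computation you spell out, with constants matching \eqref{eq:there}. The only difference is that you carry out the details explicitly where the paper cites them.
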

\begin{proof}
The theorem follows from \eqref{pclaw} in exactly the same way \cite[Thrm. 5.5]{GMP_15} follows from \cite[Eq. (5.2)]{GMP_15}.
\end{proof}

\begin{remark}[Computation of the estimator]\label{rem:l2}
 Note that if $\bw$ is not any smoother than Lipschitz continuous,
 $\str|_{(t_n,t_{n+1})}$ is also only Lipschitz continuous in time, while
 $\ter|_{(t_n,t_{n+1})}$ is polynomial.
 Since in this case the evaluation of $\Norm{\bR^{st}}_{\leb{2}((0,t_n)\times \mathbb{T})}^2 $
 with high precision is numerically extremely expensive, we aim to find smooth choices
 for $\bw$ in our test.
\end{remark}

\begin{remark}[Discontinuous entropy solutions]\label{rem:des}
\begin{enumerate}
 \item The reader may note that the estimate in Theorem \ref{The:re} does not require the entropy solution $\bu$ to be continuous.
  However, in case $\bu$ is discontinuous $\Norm{\pdx \str(s,\cdot)}_{\leb{\infty}(\mathbb{T})}$ is expected to scale like $h^{-1}$.
  Therefore, the estimator in \eqref{eq:there} will (at best) scale like $h^{q+1} \exp(h^{-1}),$ which diverges for $h \rightarrow 0.$
So in particular, the estimator diverges for $h \rightarrow 0,$ if the entropy solution is discontinuous.
\item The fact that the estimator does not converge to zero for $h, \tau
\rightarrow 0$ in case of a discontinuous entropy solution results from using the relative entropy framework, which does not guarantee uniqueness of
an entropy solution.
Indeed, it was shown in \cite{DS10} that, in general, entropy solutions may be non-unique for the Euler equations in several space dimensions.
\item It is well known that for nonlinear problems the
DG method is not stable in case of discontinuities in the solution. Thus we
can not expect the error to converge to zero and therefore the estimator
can not be expected to converge either. Stabilizing limiters can be included into the
framework but that is outside the scope of this paper and will be investigated
in a following study.
\end{enumerate}
\end{remark}

  Note that, adding zero, we may combine \eqref{sder} and \eqref{pclaw} in order to obtain
   \begin{equation}\label{stres}
\bR^{st} :=  \partial_t( \str - \ter) + \partial_x {\bf g}(\str) - \bfl(\ter) + \bR^t =: \bR^s + \bR^t.  
  \end{equation}
In this way we may decompose the residual into a 'spatial' and a 'temporal' part.
The analysis of \cite{ZS06} shows that the spatial part of the error of Runge-Kutta discontinuous Galerkin discretizations 
of systems of hyperbolic conservation laws is $\cO(h^{q+\gamma})$ with $q$ the polynomial degree of the DG scheme and $\gamma$ depending on the numerical flux. 
For general monotone fluxes  $\gamma=\tfrac{1}{2}$ and for upwind type fluxes $\gamma$ is improved to $1.$
When stating our optimality result below, we will assume that the true error of our scheme is $\cO(h^{q+\gamma} + \tau^r).$
We first consider the case of fluxes satisfying Assumption \ref{ass1} (i).

\begin{theorem}[Optimality of residuals]\label{thm:compest}
Let a numerical scheme which is of order $r$ in time and uses $q$-th order DG in space with a numerical flux satisfying Assumption \ref{ass1} (i) be given.
Let the temporal and spatial mesh size comply with  a CFL-type restriction $\tau = \cO(h_{min}).$
Let the residual $\bR^{st}$ be defined by \eqref{pclaw} and \eqref{srec}. Then, it is of optimal order $\cO(h^{q+\gamma} + \tau^r)$ with $\gamma \in \{ \tfrac{1}{2},1\}$, provided the exact error is of this order, 
and
the {\it Lipschitz constant} $L$ of $\bfl$ defined in \eqref{dgscheme} in the sense of \eqref{Lip} behaves like $h_{\min}^{-1}.$
\end{theorem}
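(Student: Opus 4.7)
The plan is to exploit the decomposition $\bR^{st}=\bR^s+\bR^t$ given in \eqref{stres} and bound the two contributions separately. The temporal part reduces to the ode theory of Section~\ref{sec:ode}, while the spatial part follows the semi-discrete analysis of \cite{GMP_15} applied pointwise in time.

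For the temporal residual $\bR^t=\partial_t\ter+\bfl(\ter)$, I would interpret the semi-discrete scheme \eqref{sde} as an ode in the finite-dimensional space $\fes_q^s$ and apply Theorem~\ref{thrm:ode} with $\bfl$ as in \eqref{dgscheme}. Since the temporal method has order $r$ and the reconstruction has matching polynomial degree, this yields
\begin{equation*}
\Norm{\bR^t}_{\leb{\infty}((0,T)\times\mathbb{T})}\leq \sum_{k=0}^{r+1}L^k\cO(\tau^{r+k}).
\end{equation*}
Under the stated scaling $L=\cO(h_{\min}^{-1})$ together with the CFL restriction $\tau=\cO(h_{\min})$, every summand is of order $\tau^r$, yielding the optimal temporal order. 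This is precisely the role of the CFL condition: without it, the summands with $k\geq 1$ would blow up as $h\to 0$.

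For the spatial residual $\bR^s=\partial_t(\str-\ter)+\partial_x\bg(\str)-\bfl(\ter)$, I would observe that at each fixed $t$, $\ter(t,\cdot)$ plays the role of a semi-discrete DG solution and $\str(t,\cdot)$ is its spatial reconstruction via \eqref{srec}. The argument in \cite{GMP_15} then gives the pointwise bound $\norm{\str-\ter}\lesssim\max_k\norm{\jump{\ter}_k}$, using the $\leb{2}$-orthogonality on each element together with the Lipschitz bound on $\bw$ from Assumption~\ref{ass1}. Combining a triangle inequality with the hypothesis that the exact error is $\cO(h^{q+\gamma}+\tau^r)$ and standard trace/approximation results for the exact solution controls the interface jumps of $\ter$ at the same order. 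The key cancellation is the identity $\bg(\str(t,x_i))=\bG(\ter(t,x_i^-),\ter(t,x_i^+))$, which follows from Assumption~\ref{ass1}(i) together with the interface condition in \eqref{srec}; element-wise integration by parts of $\partial_x\bg(\str)$ and comparison with the weak form \eqref{dgscheme} of $\bfl(\ter)$ then leaves only volume terms controlled by $\norm{\str-\ter}$.

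The new ingredient compared with \cite{GMP_15} is the time derivative $\partial_t(\str-\ter)$. Both functions are piecewise polynomial in time on $(t_n,t_{n+1})$, and their difference is obtained pointwise in $t$ by the operation described above applied to $\ter(t,\cdot)$; an inverse estimate in time gives $\Norm{\partial_t(\str-\ter)}\lesssim\tau^{-1}\Norm{\str-\ter}$, and the loss of one power of $\tau$ is absorbed into the required order via the CFL condition together with the extra factor $h^{q+\gamma}$. The main obstacle will be the careful tracking of the factor $\gamma$: the distinction between the upwind-type ($\gamma=1$) and general monotone ($\gamma=1/2$) behaviour requires the a priori analysis of the underlying DG scheme along the lines of \cite{ZS06}, which enters our argument only through the assumed order of the exact error and of the corresponding interface jumps.
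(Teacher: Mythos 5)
Your overall strategy is the same as the paper's: split $\bR^{st}=\bR^s+\bR^t$ via \eqref{stres}, treat $\bR^t$ with Theorem~\ref{thrm:ode} for the ode \eqref{sde} and kill the powers of $L$ with the CFL scaling $L^k\tau^k=\cO(1)$, and treat $\bR^s$ by the argument of \cite[Lem.~6.2, Rem.~6.6]{GMP_15} with $\bu_h$ replaced by $\ter$ (the paper does exactly this, by citation, so your filling-in of the cancellation $\bg(\str(t,x_i))=\bG(\ter(t,x_i^-),\ter(t,x_i^+))$ and the jump-superconvergence bound is in the right spirit).

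However, your treatment of the term $\partial_t(\str-\ter)$ contains a genuine gap. First, the inverse estimate in time is not available: on a time slab $(t_n,t_{n+1})$ the function $\ter$ is polynomial in $t$, but $\str(t,\cdot)$ is obtained from $\ter(t,\cdot)$ through the nonlinear map $\bw$ in \eqref{srec}, so $\str-\ter$ is in general only Lipschitz in time (cf.\ Remark~\ref{rem:l2}), not piecewise polynomial, and $\Norm{\partial_t(\str-\ter)}\lesssim\tau^{-1}\Norm{\str-\ter}$ has no justification. Second, even if such an estimate held, it would not give optimality: with $\Norm{\str-\ter}_{\leb{2}}=\cO(h^{q+\gamma})$ and $\tau=\cO(h_{\min})$ you would obtain $\cO(h^{q+\gamma-1})$, one full order short, and there is no mechanism by which this loss is ``absorbed'' -- the claimed extra factor does not exist. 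The correct route (the one behind \cite[Lem.~6.2]{GMP_15}) is to differentiate the defining conditions \eqref{srec} in time: $\partial_t(\str-\ter)(t,\cdot)$ satisfies the same $\leb{2}$-orthogonality against $\fes_{q-1}^s$, and its interface values are given by the chain rule through $\operatorname{D}\bw$ applied to $\partial_t\ter(t,x_k^\pm)$. The stability of this (linearized) reconstruction then bounds $\Norm{\partial_t(\str-\ter)(t,\cdot)}_{\leb{2}}$ by the scaled jumps of $\partial_t\ter$, not by $\tau^{-1}$ times the jumps of $\ter$; since $\partial_t\ter=-\bfl(\ter)+\bR^t$, these jumps are in turn of order $\cO(h^{q+\gamma}+\tau^r)$ by the same superconvergence argument you already use for $\jump{\ter}$, which is what yields the optimal rate for this contribution. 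Without this step your proof establishes only a suboptimal bound for $\bR^s$.
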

\begin{proof}
 Due to \eqref{stres} it is sufficient that $\bR^s$ and $\bR^t$ are of optimal order.
 The temporal residual $\bR^t$ is of the type of residuals investigated in Section \ref{sec:ode}.
Invoking Theorem \ref{thrm:ode} we obtain 
\begin{equation}
  \| \bR^t\|_{\leb{\infty}(0,T;\leb{2}(\mathbb{T}))} = \sum_k L^k \cO((\tau^r+ h^{q+\gamma})\tau^k).
\end{equation}
The condition on  $L$ and the CFL condition ensure that $L^k \tau^k = \cO(1)$ such that $\bR^t$ is of optimal order.
The spatial residual $\bR^s$ is of the form of residuals investigated in \cite{GMP_15} and can be estimated as in \cite[Lem. 6.2]{GMP_15} with 
$\bu_h$ being replaced by $\ter.$
Arguments similar to
\cite[Rem. 6.6]{GMP_15} show that $\bR^s$ is of optimal order.
\end{proof}

\begin{remark}[Spatial residual]
Note that any computable reconstruction $\tilde{\bu} \in \sob{1}{\infty}((0,T)\times \mathbb{T}, \mathcal{U})$ of the numerical solution gives rise to an error estimate
  of the form \eqref{eq:there}.
  Our particular reconstruction and (at the same time) the condition on the numerical flux in Assumption \ref{ass1} (i) is driven by our desire for the spatial part $\bR^s$ 
  of the residual to be of optimal order.
  This part of the residual, and its optimality, was extensively investigated in \cite{GMP_15}.
  We will investigate the effects of added artificial viscosity in Theorem
  \ref{lem:compest2} and Lemma \ref{lem:compest3} showing that low order viscosity leads to
  a suboptimal convergence of the estimator.
\end{remark}

Let us turn our attention to numerical fluxes satisfying Assumption \ref{ass1} (ii).
Our goal is to ascertain the effect of artificial viscosity on the order of the residual.
Note that we define the reconstruction by \eqref{srec} as before, accounting for $\bw$ but not for the artificial viscosity.
In Section \ref{subs:LFs} we will show for a Roe type flux that for one flux there might be different choices of $\bw$ leading 
to different values of $\nu$
It is not obvious whether this approach can be generalized so that optimal reconstructions can be obtained for more general numerical fluxes.

To simplify the presentation we will assume in the following that 
$\tau = \cO(h_{min})$ and that the order of the time stepping method is compatible
with the order of the space discretization, i.e., $r=q+1$. 
We will first show how an upper bound for the residual depends on the order
$\nu$ of the artificial viscosity.
\begin{theorem}[Conditional optimality of residuals]\label{lem:compest2}
For $q \geq 1$ let a numerical scheme which is order $q+1$ in time and uses $q$-th order DG in space with a numerical flux satisfying Assumption \ref{ass1} (ii) be given.
Let the residual $\bR^{st}$ be defined by \eqref{pclaw} and \eqref{srec}.
Let the exact error be of order
 $\cO(h^{q+\gamma}).$
 Then,  $\bR^{st}$ is of order $\cO(h^{q+\gamma} +h^{q+\gamma+\nu-1})$ with $\gamma \in \{ \tfrac{1}{2},1\}$, provided
the {\it Lipschitz constant} $L$ of $\bfl$ defined in \eqref{dgscheme}, in the sense of \eqref{Lip}, behaves like $h_{\min}^{-1}.$
\end{theorem}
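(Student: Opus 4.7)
The plan is to follow the pattern of the proof of Theorem \ref{thm:compest} and isolate the sole new ingredient, namely the artificial-viscosity correction in the numerical flux of type (ii). I would start from the decomposition $\bR^{st} = \bR^s + \bR^t$ given in \eqref{stres}. The temporal residual $\bR^t = \partial_t\ter + \bfl(\ter)$ is handled exactly as in Theorem \ref{thm:compest}: Theorem \ref{thrm:ode}, combined with $L = \cO(h_{\min}^{-1})$ and the CFL relation $\tau = \cO(h_{\min})$, gives $\|\bR^t\|_{\leb{\infty}(0,T;\leb{2}(\mathbb{T}))} = \cO(\tau^{q+1} + h^{q+\gamma}) = \cO(h^{q+\gamma})$, which is subsumed by the claimed rate.

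For the spatial residual $\bR^s = \partial_t(\str - \ter) + \partial_x\bg(\str) - \bfl(\ter)$, I would split the semi-discrete operator as $\bfl = \bfl^{(\mathrm{i})} + \bfl^{\mathrm{visc}}$, where $\bfl^{(\mathrm{i})}$ corresponds to the consistent part $\bg(\bw({\bf a},{\bf b}))$ of the flux and $\bfl^{\mathrm{visc}}$ to the viscous correction $-\mu({\bf a},{\bf b};h)\,h^\nu({\bf b}-{\bf a})$. Writing $\bR^s = \bR^{s,(\mathrm{i})} - V_h$ with $V_h := \bfl^{\mathrm{visc}}(\ter) \in \fes_q^s$, the first piece $\bR^{s,(\mathrm{i})}$ is precisely the residual treated in \cite[Lem.~6.2]{GMP_15} (with $\bu_h$ replaced by $\ter$), and so satisfies $\|\bR^{s,(\mathrm{i})}\|_{\leb{2}((0,T)\times\mathbb{T})} = \cO(h^{q+\gamma})$ by the same reasoning as in the proof of Theorem \ref{thm:compest}.

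The substance is then the estimate of $V_h$. By construction,
\begin{equation*}
\int_{\mathbb{T}} V_h \cdot {\boldsymbol\psi}\,\d x = -\sum_{i=0}^{M-1} \mu(\ter(x_i^-),\ter(x_i^+);h)\, h^\nu\, \jump{\ter}_i \cdot \jump{{\boldsymbol\psi}}_i \qquad \forall\,{\boldsymbol\psi}\in\fes_q^s.
\end{equation*}
Testing with ${\boldsymbol\psi} = V_h$, invoking $|\mu({\bf a},{\bf b};h)| \leq \mu_K(1+|{\bf b}-{\bf a}|/h)$, and applying Cauchy--Schwarz interface-wise yields
\begin{equation*}
\|V_h\|_{\leb{2}(\mathbb{T})}^2 \leq C\, h^\nu \Bigl(\textstyle\sum_i |\jump{\ter}_i|^2\Bigr)^{1/2}\Bigl(\textstyle\sum_i |\jump{V_h}_i|^2\Bigr)^{1/2}.
\end{equation*}
Since the entropy solution $\bu$ is continuous in the regime where an $\cO(h^{q+\gamma})$ error makes sense, $\jump{\ter}_i = \jump{\ter-\bu}_i$; a cell-wise trace inequality together with the assumed rate gives $\sum_i |\jump{\ter}_i|^2 \leq C h^{-1}\|\ter-\bu\|_{\leb{2}(\mathbb{T})}^2 = \cO(h^{2(q+\gamma)-1})$, while an inverse trace estimate on $\fes_q^s$ delivers $\sum_i |\jump{V_h}_i|^2 \leq C h^{-1}\|V_h\|_{\leb{2}(\mathbb{T})}^2$. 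Cancelling one factor of $\|V_h\|_{\leb{2}(\mathbb{T})}$ and integrating in time produces $\|V_h\|_{\leb{2}((0,T)\times\mathbb{T})} = \cO(h^{q+\gamma+\nu-1})$, which, together with the estimates of $\bR^t$ and $\bR^{s,(\mathrm{i})}$, completes the proof.

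The technical point I expect to require most care is the factor $(1+|{\bf b}-{\bf a}|/h)$ in the bound for $\mu$: the computation above has tacitly used only the summand $1$, and one must argue that the contribution proportional to $|\jump{\ter}_i|/h$ is of strictly higher order. This follows from a pointwise jump estimate $|\jump{\ter}_i| = \cO(h^{q+\gamma-1/2})$ obtained by combining an inverse inequality applied to $\ter - \Pi\bu$ with the interpolation error of a smooth $\bu$, under which the extra factor is $\cO(h^{q+\gamma - 3/2})$ and hence harmless for $q\geq 1$. A secondary concern is verifying that $\partial_t(\str-\ter)$ inside $\bR^{s,(\mathrm{i})}$ retains optimality; this is handled by the CFL relation, which, as in \cite{GMP_15}, transfers temporal rates into spatial ones.
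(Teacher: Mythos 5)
Your argument is correct and in substance identical to the paper's proof: the paper likewise reduces the new difficulty to the viscous interface contribution and bounds it by $h^\nu$ times a weighted jump sum of $\ter$ (the bound \eqref{mn06} borrowed from \cite{MN06}, which plays the role of your trace-inequality-plus-error-rate step), using $q\geq 1$ to absorb the $|\jump{\ter}|/h$ part of $\mu$ exactly as in your closing remark, and delegating the remaining type-(i) terms and the temporal residual to \cite{GMP_15} and Theorem \ref{thrm:ode}. The only difference is bookkeeping: the paper estimates the combined term ${\bf R}_g=\cP_q[\partial_x {\bf g}(\str)]-\bfl(\ter)$ by testing with itself and integrating by parts, so that your $V_h$ appears there as the interface term $E_2$, whereas you split the DG operator into consistent and viscous parts at the outset and test $V_h$ against itself; both routes give the same $\cO(h^{q+\gamma+\nu-1})$ arithmetic.
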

\begin{remark}[Conditional optimality]
 Note that the rate proven here is optimal for $\nu \geq 1$ but suboptimal otherwise.
 We will show in Lemma \ref{lem:compest3} that the $\cO(h^{q+\gamma+\nu-1})$ part of the residual is actually present. 
\end{remark}

\begin{proof}
Our argument here is based on an observation in \cite{MN06} 
which states that
\begin{equation}\label{mn06}
  \sum_i h_i | \jump{\ter }_i |^2  = \cO(h^{2q +2\gamma}),
\end{equation}
i.e., this sum is of the order of the square of the $\leb{2}$-norm of the true error.
Let us note that \eqref{mn06} implies
\begin{multline}\label{jumpbound}
\max_i  \left|\frac{\jump{\ter}_i}{h_i} \right|^2  \lesssim \frac{1}{h^3} \max_i \Big( h_i \left|\jump{\ter}_i\right|^2\Big)
\lesssim \frac{1}{h^3} \sum_i h_i | \jump{\ter }_i |^2 \lesssim \cO(h^{2q +2\gamma-3})~.
\end{multline}
Since $q \geq 1$ equation \eqref{jumpbound} implies
\begin{equation}\label{lip2}
\max_i  \left|\frac{\jump{\ter}_i}{h_i} \right| \lesssim 1.
\end{equation}
We will not consider the full spatial residual $\bR^s,$ since,
following the arguments in the proof of \cite[Lem. 6.2]{GMP_15}, 
the only part which changes as Assumption \ref{ass1} (i) is replaced by \ref{ass1} (ii) is
the estimate of the $\leb{2}$-norm of 
\begin{equation}\label{gfub}
 {\bf R}_g:=\cP_q [\partial_x {\bf g}(\str)]-\bfl(\ter) \in \fes_q^s
\end{equation}
where $\cP_q$ denotes $\leb{2}$-orthogonal projection into $\fes_q^s$ and we note that $\bfl(\ter)$ corresponds to $\partial_x \hat {\bf g}$ in \cite{GMP_15}.
We will only consider this part \eqref{gfub} of the spatial residual in the sequel.
Using integration by parts we obtain 
\begin{equation}\label{sopt1}
 \begin{split}
 \int_{\mathcal{T}} |{\bf R}_g|^2\d x& =\int_{\mathcal{T}} (\cP_q [\partial_x {\bf g}(\str)]- \bfl(\ter))\cdot {\bf R}_g\d x \\
  &= \int_{\mathcal{T}} (\partial_x {\bf g}(\str)- \bfl(\ter))\cdot{\bf R}_g\d x
 \\
 &= \int_{\mathcal{T}} ({\bf g} (\ter) -{\bf g}(\str))  \cdot \partial_x^e {\bf R}_g \d x
 + \sum \Big({\bf G}((\ter)^\pm)- {\bf g}({\bf w}((\ter)^\pm))\Big)\cdot\jump{{\bf R}_g}\\
 &= \int_{\mathcal{T}} ({\bf g} (\ter) -{\bf g}(\str))  \cdot \partial_x^e {\bf R}_g\d x
 + h^\nu \sum \mu((\ter)^-,(\ter)^+;h_i) \jump{\ter}\cdot \jump{{\bf R}_g}\\
 &=: E_1 + E_2 ,
 \end{split}
\end{equation}
where $(\ter)^\pm$ is an abbreviation for $((\ter)^-,(\ter)^+):=(\ter(x_i^-),\ter(x_i^+)) .$
Using the same trick as in the proof of  \cite[Lem. 6.2]{GMP_15} we obtain 
\begin{multline}\label{e1}
 |E_1| \lesssim
    \norm{\ter}_{\sob{1}{\infty}(\mathcal{T})}
    \Norm{\str - \ter}_{\leb{2}(\mathbb{T})} 
    \Norm{{\bf R}_g}_{\leb{2}(\mathbb{T})}\\
    + 
    \left( \sum_{i=0}^{M-1} \frac{1}{h_{i-\frac{1}{2}}^2}   \int_{x_{i-1}}^{x_i} \norm{\str - \ter}^4\d x\right)^{\frac{1}{2}} \Norm{{\bf R}_g}_{\leb{2}(\mathbb{T})}.
\end{multline}
In order to bound $E_2$ we employ Cauchy-Schwarz inequality, trace inequalities \cite[Lem. 1.46]{DE12}, and \eqref{lip2} so that we get
\begin{multline}\label{e2}
  |E_2| \lesssim \mu_\mathfrak{K} h^\nu \Big( \sum_{i=0}^{M-1}
  \Big(1+\frac{\jump{\ter}}{h_i}\Big)^2\frac{1}{h_i} \jump{\ter}^2\Big)^{\frac{1}{2}}
  \Big( \sum_{i=0}^{M-1} h_i \jump{{\bf R}_g}^2\Big)^{\frac{1}{2}} 
 \\ 
 \lesssim \mu_\mathfrak{K} h^\nu \Big( \sum_{i=0}^{M-1} \frac{1}{h_i} \jump{\ter}^2\Big)^{\frac{1}{2}} \Norm{{\bf R}_g}_{\leb{2}(\mathbb{T})}.
\end{multline}
Inserting \eqref{e1} and \eqref{e2} into \eqref{sopt1} and dividing
 both sides by $ \Norm{{\bf R}_g}_{\leb{2}(\mathbb{T})}$ we obtain  
 \begin{multline}\label{rs}
  \Norm{{\bf R}_g}_{\leb{2}(\mathbb{T})} \leq\norm{\ter}_{\sob{1}{\infty}}
    \Norm{\str - \ter}_{\leb{2}(\mathbb{T})} 
    \\
    + 
    \left( \sum_{i=0}^{M-1} \frac{1}{h_{i-\frac{1}{2}}^2}   \int_{x_{i-1}}^{x_i} \norm{\str - \ter}^4\d x \right)^{\frac{1}{2}}
    +\mu_\mathfrak{K} h^\nu \Big( \sum_{i=0}^{M-1} \frac{1}{h_i} \jump{\ter}^2\Big)^{\frac{1}{2}}.
 \end{multline}
Following the arguments of  \cite[Rem. 6.6]{GMP_15} we see that the first two terms on the right hand side of \eqref{rs} are of order
 $\cO(h^{q+\gamma})$ 
while the last term, which has no counterpart in the analysis presented in  \cite{GMP_15}, is of order
$\cO(h^{\nu-1+q+\gamma}).$ 
\end{proof}


\subsection{Optimal reconstruction for a Roe-type flux}\label{subs:LFs}

The goal of this section is twofold. Firstly, we will study a Roe type flux with a
reconstruction based on a simple average showing that this can lead to suboptimal convergence
of the residual. The flux with this choice of ${\bf w}$ fits into the framework of Assumption \ref{ass1} (ii) with $\nu=0$ 
thus we show that our estimate in Theorem~\ref{lem:compest2} is sharp.
Secondly, we show that for the same flux it is possible to find a more
involved version of ${\bf w}$,
leading to a reconstruction with optimal order residual. We hope also to convince the reader of the 
difficulty of finding such a reconstruction for general numerical flux functions.

In the following we consider a strictly hyperbolic system of the form 
\eqref{claw} using a numerical flux of Roe type. In the following we fix two
elements ${\bf a},{\bf b}$ in a convex, compact subset $K \subset \mathcal{U}$ and denote their average with
${\bf c}:=\frac{1}{2}{({\bf a}+{\bf b})}$. Let $A:=Dg({\bf c})$ be the flux Jacobian
at this point. Due to the hyperbolicity of the system, $A$ is diagonalizable:
$LAR=D$ with a diagonal matrix $D={\rm diag}(\lambda_1,\dots,\lambda_m)$ consisting of
the eigenvalues and matrices $R$ containing the right Eigenvectors ${\bf r}_i$ as columns and
$L$ with left eigenvectors ${\bf l}_i$ as rows. We choose the right and the left eigenvectors to be dual to each other, i.e., ${\bf l}_i\cdot {\bf r}_j = \delta_{ij}$.
Consider now the numerical flux function of Roe type given by
\begin{equation}\label{lff} 
  \bG({\bf a},{\bf b})= \bg({\bf c}) + \frac{1}{2} \big| A \big| ({\bf a} -{\bf b})~,
\end{equation}
where $|A|=R\;|D|\;L$ and $|D|:={\rm diag}(|\lambda_1|,\dots,|\lambda_m|)$.
Note that, as we restrict ourselves to the strictly hyperbolic case, appropriately normalized
right and left eigenvalues are $C^{1}$ vector fields on $\mathcal{U}$ and, thus, there is a bound $C^*>0$ on
$| L | \cdot | R | $ depending on $K,$ but not on the specific choice of ${\bf a},{\bf b} \in K.$

First we will demonstrate that in general taking ${\bf w}({\bf a},{\bf b})={\bf c}$ does not lead
to an optimal error estimate. It is easy to see that this choice leads to a
$\mu$ which satisfies Assumption \ref{ass1} (ii) with $\nu=0$. 
Taking for example the scalar case and noting $|A|=|g'(c)|$ we find:
\begin{equation}
  |\mu(a,b;h)| = \frac{|g(c) - G(a,b)|}{|b-a|} = \frac{1}{2}|g'(c)|
\end{equation}
which is clearly bounded on any compact subset of the state space.
The following Lemma shows that the suboptimal rate stated in
Theorem~\ref{lem:compest2} is
sharp and that therefore optimal order for the residual is in general only
guaranteed if the artificial viscosity term is chosen with $\nu\geq 1$.
Taking this result together with the observation made above it is clear
that ${\bf w}({\bf a},{\bf b})={\bf c}$ will in general not lead to an optimal
rate of convergence of the residual.

\begin{lemma}[Suboptimality]\label{lem:compest3}
Consider the scalar linear problem
  \[ u_t +u_x =0.\]
and a numerical scheme which is order $2$ in time and uses first order DG
in space on an equidistant mesh of size $h$ with a numerical flux satisfying 
Assumption \ref{ass1} (ii) with $\nu=0$ and $\mu(a,b;h)=\mu_0 >0$.
Let the temporal and spatial mesh size comply with  a CFL-type restriction
$\tau = \cO(h).$
Then, the norm of the residual  $\bR^{st}$, defined by \eqref{pclaw} and \eqref{srec}, is bounded 
from below by terms of order
$h^{\gamma}$ even if the error of the method is $\cO(h^{1+\gamma}).$
\end{lemma}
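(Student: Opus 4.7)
The plan is to establish the lower bound by a direct analysis of the spatial component of the residual. First, I would use the decomposition $\bR^{st} = \bR^s + \bR^t$ from \eqref{stres}. Invoking Theorem \ref{thrm:ode} applied to the DG semi-discretization, the CFL assumption $\tau=\cO(h)$ together with the hypothesis $L\sim h_{\min}^{-1}$ gives $L^k \tau^k = \cO(1)$ and hence $\Norm{\bR^t}_{\leb{\infty}(0,T;\leb{2}(\mathbb{T}))} = \cO(\tau^r) = \cO(h^2)$. Since $\gamma \leq 1$ this is $o(h^\gamma)$, so it is enough to exhibit initial data for which $\Norm{\bR^s}_{\leb{2}((0,T)\times\mathbb{T})} \gtrsim h^\gamma$.

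Next I would reduce to the flux residual $\bR_g := \cP_q[\partial_x g(\str)] - \bfl(\ter)$. Since $q=1$ and $g$ is linear, $\partial_x^e \str$ already lies in $\fes_1^s$, so $\cP_q$ acts as the identity and $\bR_g \in \fes_1^s$. Repeating the integration by parts from the proof of Theorem \ref{lem:compest2} together with the continuity of $\str$ at cell interfaces and the identity $\bG({\bf a},{\bf b}) - g(\bw({\bf a},{\bf b})) = -\mu_0({\bf b}-{\bf a})$, one obtains for any $\phi \in \fes_1^s$
\begin{equation*}
\int_{\mathbb{T}} \bR_g\,\phi \,\d x = -\int_{\mathcal{T}}(\str-\ter)\,\partial_x^e \phi \,\d x - \mu_0 \sum_i \jump{\ter}_i \jump{\phi}_i.
\end{equation*}
By the first line of \eqref{srec}, $\str-\ter$ has vanishing cell averages when $q=1$. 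Testing with the indicator $\chi_j$ of cell $j$ (for which $\partial_x^e \chi_j = 0$) and with the piecewise linear function equal to $2(x - x_{j-1/2})/h$ on cell $j$ and zero elsewhere (for which the volume term vanishes by the zero-mean condition on $\str-\ter$) isolates the cell average of $\bR_g$ as $(\mu_0/h)(\jump{\ter}_{j-1}-\jump{\ter}_j)$ and its slope as a multiple of $(\mu_0/h)(\jump{\ter}_{j-1}+\jump{\ter}_j)$. Using the elementary identity $(a-b)^2 + 3(a+b)^2 \geq 2(a^2+b^2)$ and summing cell by cell on the uniform mesh then yields
\begin{equation*}
\Norm{\bR_g(t,\cdot)}_{\leb{2}(\mathbb{T})}^2 \;\geq\; \frac{4\mu_0^2}{h} \sum_i |\jump{\ter}_i|^2.
\end{equation*}

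To convert this into the required $h^\gamma$ lower bound, I would specialise to initial data (say $u_0(x) = \sin(2\pi x)$) for which the scheme is of sharp order, i.e.\ the $\leb{2}$-error is \emph{exactly} of order $h^{q+\gamma}$. A Fourier diagonalisation of the DG semi-discretization on the uniform mesh then gives matching \emph{lower} bounds $|\jump{\ter}_i| \gtrsim h^{q+\gamma}$ on a positive fraction of interfaces. This produces $\sum_i |\jump{\ter}_i|^2 \gtrsim h^{2\gamma+1}$, whence $\Norm{\bR_g(t,\cdot)}_{\leb{2}(\mathbb{T})} \gtrsim h^\gamma$ pointwise in $t$, and integration in time yields $\Norm{\bR_g}_{\leb{2}((0,T)\times\mathbb{T})} \gtrsim h^\gamma$.

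The main obstacle is twofold. The deeper issue is producing the \emph{lower} bound on the jumps: \eqref{mn06} only delivers an upper bound, and a matching lower bound is scheme specific rather than a consequence of consistency. For the scalar linear advection equation this can be handled by the Fourier computation just mentioned, but the approach does not generalise easily. The second obstacle is to show that the remaining piece $\partial_t(\str-\ter)$ of $\bR^s$, which a priori can also be of size $h^\gamma$, does not cancel $\bR_g$: for the sinusoidal reference solution the two contributions have distinct spatial-harmonic content (the jump term generates short-wavelength modes while $\partial_t(\str-\ter)$ inherits the smooth modes from the reconstruction), so the same Fourier analysis rules out cancellation.
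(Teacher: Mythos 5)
Your core computation is essentially the paper's own proof: the paper likewise reduces to the flux part ${\bf R}_g$ (arguing, exactly as you do via Theorem \ref{lem:compest2} and Theorem \ref{thrm:ode}, that all other contributions to $\bR^{st}$ are of strictly higher order and hence cannot cancel), kills the volume term using the orthogonality \eqref{srec}$_1$ because $g$ is linear and $q=1$, and then evaluates $\Norm{{\bf R}_g}_{\leb{2}(\mathbb{T})}$ against the same per-cell basis of constants and linears, arriving at the lower bound $\tfrac{2\mu_0}{h}\sum_j\jump{\ter}_j^2$ through the same elementary identity $(a-b)^2+3(a+b)^2\geq 2(a^2+b^2)$; the discrepancy between your constant $4\mu_0^2/h$ and the paper's, and your sign slip in the interface term (with the paper's jump convention $\bG-\bg(\bw)=+\mu_0\jump{\ter}$, cf.\ \eqref{sopt1}), are immaterial since only squares enter. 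The one genuine divergence is the final step, and it is also the soft spot of your write-up: the paper does not attempt a scheme-specific argument but imports the statement that $\tfrac1h\sum_j\jump{\ter}_j^2$ is generically of order $h^{2\gamma}$ when the error is $\cO(h^{1+\gamma})$ from the arguments of \cite[Rem.~6.6]{GMP_15}, whereas you propose a Fourier diagonalisation of the DG operator for sinusoidal data. That route is plausible, but in your proposal it remains an assertion (you flag it yourself as the main obstacle): what must be bounded from below are the jumps of the evolved, temporally reconstructed solution $\ter$, so one has to exclude cancellation between the projection-type jumps of the exact solution and the accumulated evolution error, which is precisely the scheme-specific content the paper's citation is meant to cover; as it stands this step is claimed, not proved, in your argument. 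Your second worry, that $\partial_t(\str-\ter)$ might itself be of size $h^\gamma$ and cancel ${\bf R}_g$, needs no spectral-content argument: by the estimates behind Theorem \ref{lem:compest2} (i.e.\ \cite[Lem.~6.2, Rem.~6.6]{GMP_15}, using \eqref{mn06}) that term is $\cO(h^{q+\gamma})=\cO(h^{1+\gamma})$, strictly smaller than $h^\gamma$, which is how the paper dismisses all such cancellations in one line.
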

\begin{proof}
 As argued in the proof of Theorem \ref{lem:compest2} it is sufficient to show that $\Norm{{\bf R}_g}_{\leb{2}(\mathbb{T})}$ is bounded 
 from below by terms of order $h^{\gamma}.$ All the other terms are of higher order and can, therefore, not cancel with ${\bf R}_g.$
 The assumptions of the Lemma at hand are a special case of those of
 Theorem \ref{lem:compest2} so that we have \eqref{sopt1}.
Using $g(u)=u$ and \eqref{srec}$_1$ we obtain, analogous to \eqref{sopt1},
\begin{equation}
\int_{\mathbb{T}} R_g \phi \operatorname{d} x=  \int_{\mathcal{T}} (\cP_1 [\partial_x  g(\hat u^{st})]- f(\hat u^t)) \phi \operatorname{d} x
 = 
  \sum \mu_0 \jump{\hat u^t} \jump{\phi}~,
\end{equation}
for all $\phi \in \fes^s_q.$
Since, an orthonormal basis of $\fes_1^s$ is given by $\{ \phi_j , \psi_j\}_{j=0}^{M-1}$
with 
\begin{equation}
 \begin{split}
  \phi_j (x)&:= \left\{ \begin{array}{ccc}
                       \sqrt{\frac{1}{2 h}}&:& x \in [x_j,x_{j+1}] \\ 0 &:& \text{else}
                      \end{array}
  \right.\\
  \psi_j(x)&:= \left\{ \begin{array}{ccc}
                       \sqrt{\frac{3}{2h}}( x- \frac{x_j+x_{j+1}}{2}) &:& x \in [x_j,x_{j+1}] \\ 0 &:& \text{else}
                      \end{array}
  \right.
 \end{split}
\end{equation}
we get for any fixed $t \in [0,T]$
\begin{equation}\label{lflb}
\begin{split}
 &\Norm{\cP_1 [\partial_x  g(\hat u^{st})]- f(\hat u^t)}_{\leb{2}}^2\\
 &=  \Big( \int_{\mathcal{T}}  \Big(\cP_1 [\partial_x  g(\hat u^{st})]- f(\hat u^t) \Big)\phi_j\Big)^2
 + \Big( \int_{\mathcal{T}} \Big( \cP_1 [\partial_x  g(\hat u^{st})]- f(\hat u^t)\Big) \psi_j\Big)^2\\
&=  \mu_0 \sum_j \left( \frac{1}{2h}\big(-\jump{ \hat u^t}_j +  \jump{\hat u^t }_{j+1}\big)^2 
+ \frac{3}{2h}\big(\jump{ \hat u^t}_j +  \jump{ \hat u^t }_{j+1}\big)^2  \right)\\
&=  \frac{\mu_0}{2h} 
\sum_j \left( 4 \big( \jump{\hat u^t }_{j+1}\big)^2 + 4\jump{\hat u^t }_{j+1}\jump{\hat u^t }_{j} + 4 \big( \jump{\hat u^t }_{j}\big)^2 \right)  \\
&\geq  \frac{\mu_0}{2h} 
\sum_j \left( 2 \big( \jump{\hat u^t }_{j+1}\big)^2 +  2 \big( \jump{\hat u^t }_{j}\big)^2 \right)  = 
\frac{2\mu_0}{h}
\sum_j \big( \jump{\hat u^t }_{j}\big)^2 .
 \end{split}
\end{equation}
According to the arguments given in \cite[Rem. 6.6]{GMP_15} the lower bound derived in \eqref{lflb} is of order $h^{\gamma}$ even if the error of the method is $\cO(h^{1+\gamma}).$
\end{proof}

We conclude this section by showing that we can choose a function $\bw({\bf a},{\bf b})$ such that \eqref{lff}
satisfies Assumption \ref{ass1} (ii) with $\nu=1$.
Using Theorem~\ref{lem:compest2} we thus obtain a reconstruction of optimal order.
We restrict ourselves to a DG scheme with polynomials of degree $q \geq 1$ such that, 
following the discussion in the proof of Theorem \ref{lem:compest2}, we can
restrict our attention to the case
\begin{equation}\label{lesssim}
 \big|{\bf a} - {\bf b}\big| \lesssim h.
\end{equation}

In order to define the reconstruction we define the characteristic decomposition of ${\bf a},{\bf b}$ given by
${\boldsymbol \alpha}:=L{\bf a},{\boldsymbol \beta}:=L{\bf b}$ and will study a $\bw$ of the form
$\bw({\bf a},{\bf b}):=R{\boldsymbol \omega}({\boldsymbol \alpha},{\boldsymbol \beta})$. To define ${\boldsymbol \omega}=\big(\omega_i(\alpha_i,\beta_i)\big)_{i=1}^m$
consider a (not-strictly) monotone smooth function $\chi : \mathbb{R} \rightarrow \mathbb{R}$ such that 
$\chi(z)=0$ for $z<-1$ and $\chi(z)=1$ for $z>1.$ We define
$\chi_h(z):= \chi(z/h).$ Then
\begin{equation}\label{lfur} 
  \omega_i(\alpha_i,\beta_i) = \chi_h\left( \lambda_i \right)\;\alpha_i +  
           \Big( 1 - \chi_h\left(\lambda_i\right)\Big)\;\beta_i~.
\end{equation}
Note that ${\boldsymbol \omega}$ provides upwinding of the characteristic variables smoothed out so that the
reconstruction is smooth enough to allow for an efficient computation of
the integrated residual. 

Finally, we can now define the function $\mu$ 
\[ \mu({\bf a},{\bf b};h) = \frac{\bg(\bw({\bf a},{\bf b})) - \bG({\bf a},{\bf b})}{h \| {\bf a}- {\bf b}\|^2} \otimes ({\bf b}-{\bf a}).\]
In order to show that $\mu$ can be bounded such that 
Assumption \ref{ass1} (ii) is satisfied with $\nu=1$, we need to prove 
\begin{equation}\label{a:goal}
   \big| \bg(\bw({\bf a},{\bf b})) -\bG({\bf a},{\bf b})\big| \stackrel{!}{\lesssim} h | {\bf b}- {\bf a}|.
   \end{equation}
   Using Taylor expansion we obtain 
\begin{multline*}
   \big| \bg(\bw({\bf a},{\bf b})) -\bG({\bf a},{\bf b})\big| \leq
  \big| A(\bw({\bf a},{\bf b})-{\bf c}) - \frac{1}{2}|A|({\bf a}- {\bf b}) \big|\\+ \big|(\bw({\bf a},{\bf b})-{\bf c})^T\operatorname{H}\bg({\boldsymbol \xi})(\bw({\bf a},{\bf b})-{\bf c}) \big| =: S_1 + S_2
  \end{multline*}
  where $\operatorname{H}\bg$ is a tensor of third order consisting of Hessians of the components of $\bg$ and ${\boldsymbol \xi}$ is a convex combination of $\bw({\bf a},{\bf b})$ and ${\bf c}$.

Defining ${\boldsymbol \gamma} = L{\bf c} = \frac{1}{2}({\boldsymbol \alpha}+{\boldsymbol \beta})$
we can bound $S_1$ by
\begin{equation}
    S_1 = \big| R D({\boldsymbol \omega}({\boldsymbol \alpha},{\boldsymbol \beta})-
          {\boldsymbol \gamma}) - \frac{1}{2}R|D|({\boldsymbol \alpha}-{\boldsymbol \beta})  \big|
          \leq |R| \ | {\boldsymbol \delta} |
\end{equation}
with 
${\boldsymbol \delta}:=D({\boldsymbol \omega}({\boldsymbol \alpha},{\boldsymbol \beta})-{\boldsymbol \gamma}) - 
   \frac{1}{2}|D|({\boldsymbol \alpha}-{\boldsymbol \beta}) $.

Now now consider each component of the vector ${\boldsymbol \delta}$ separately,
distinguishing two cases:

\subsection*{ Case one: $\big|\lambda_i| \geq h$.\\}
We show the computation for $\lambda_i \geq h, $ and $ \lambda_i \leq - h $ is analogous:
\begin{multline} 
\lambda_i\big(\omega_i(\alpha_i,\beta_i)-\gamma_i\big) -
         \frac{1}{2}|\lambda_i|(\alpha_i-\beta_i) = 
 \lambda_i(\alpha_i-\gamma_i) - \frac{1}{2}\lambda_i(\alpha_i-\beta_i) = 0
\end{multline}
\subsection*{
Case two: $-h \leq \lambda_i \leq  h$.\\}
Now $ 
\big|\lambda_i \big(\omega_i(\alpha_i,\beta_i)-\gamma_i\big) -
        \frac{1}{2} |\lambda_i|(\alpha_i-\beta_i) \big| \leq
  h |\alpha_i-\beta_i|.
$

Combining both cases we get that each component of ${\boldsymbol \delta}$
can be bounded by $h|\alpha_i-\beta_i|$ and thus
$$ S_1 \leq h|{\boldsymbol \alpha}-{\boldsymbol \beta}| \lesssim h |{\bf a}-{\bf b}|. $$

In order to bound $S_2$ we observe
\begin{multline}\label{wmc}
S_2\leq \big| \operatorname{H}\bg({\boldsymbol \xi}) \big| \big|\bw({\bf a},{\bf b})-{\bf c}\big|^2
 \leq  \big| \operatorname{H}\bg({\boldsymbol \xi}) \big| \, | R|^2 \, \big| {\boldsymbol \omega}({\boldsymbol \alpha},{\boldsymbol \beta})-{\boldsymbol \gamma} \big|^2\\
 \leq \big| \operatorname{H}\bg({\boldsymbol \xi}) \big|\, | R|^2 \, \big|{\boldsymbol \alpha} - {\boldsymbol \beta}\big|^2
  \lesssim  \big| \operatorname{H}\bg({\boldsymbol \xi}) \big| h \big|{\bf a} - {\bf b}\big|,
\end{multline}
so that it remains to show that $| \operatorname{H} \bg({\boldsymbol \xi})|$ is bounded uniformly in $h$
for $h$ small enough.
It is sufficient to show that ${\boldsymbol \xi}$ is in some compact subset of $\mathcal{U}$ for $h$ small enough.
Since $K$ is compact and $\mathcal{U}$ is open, there exists $\varepsilon>0$ such that
$K_\varepsilon := \{ x \in \mathbb{R}^m \, : \, \operatorname{dist}(x,K) \leq \varepsilon\}$
is a convex, compact subset of $\mathcal{U}$.
By \eqref{wmc} and \eqref{lesssim} we know  ${\bf c} \in K$ and $| \bw -{\bf c}| \lesssim h$ so that
$\bw \in K_\varepsilon$ for $h$ small enough. This implies ${\boldsymbol \xi} \in  K_\varepsilon$
which completes the proof of \eqref{a:goal}. 
\begin{remark}[Difficulty for general fluxes]
For the Roe type flux studied here an optimal reconstruction is possible
since the artificial viscosity term $|A|({\bf a}-{\bf b})$
matches the linear term in the Taylor expansion of $\bg$ so that the vector 
${\boldsymbol \delta}$ vanishes in the first case studied above. 
In the second case it is crucial that we have the same Eigenvalue in both
terms contributing to $\delta_i$ so that both are proportional to 
$h|{\bf a}-{\bf b}|$. Take for example a viscosity term typically used in
local Lax-Friedrichs type fluxes, i.e., of the form
$\max_k|\lambda_k|({\bf a}-{\bf b})$.
Now consider a situation with $\lambda_1=0$ then
$|\delta_1|= \max_k|\lambda_k|\;|\alpha_1-\beta_1|$.
Now if $\alpha_1\neq\beta_1$ and for example $\lambda_2=1$ then
$\delta_1$ can only be bounded by $|{\bf a}-{\bf b}|$ without the 
factor of $h$ which would be required for an optimal reconstruction.
\end{remark}


\section{Numerical experiments}\label{sec:num}

In the following we will show some numerical tests verifying the convergence
rates presented in the previous section. As pointed out smooth
reconstructions are preferable to allow for an efficient computation of the
integrated residual. One numerical flux which gives good results as 
long as the entropy solution is quite regular and allows for optimal
estimates with a smooth $\bw$ is the Richtmyer flux \eqref{def:LW} with or
without additional diffusion term. We have performed tests using the
following numerical flux
 \begin{equation}\label{def:Ri}
   \bG({\bf a},{\bf b})= \bg(\bw({\bf a},{\bf b})) - \mu |{\bf b}-{\bf a}|({\bf b}-{\bf a})~,
     \quad \bw({\bf a},{\bf b})= \frac{{\bf a} + {\bf b}}{2} -
     \frac{\lambda}{2} ( \bg({\bf b}) - \bg({\bf a}))~.
 \end{equation}
Where $\lambda=\frac{\tau}{h}$ and $\mu=\frac{1}{2}$ or $\mu=0$. The artificial
viscosity provides a simple approximation of a viscosity of the form
$h^2\partial_x\big(|\partial_x u|\partial_x u\big)$.
Thus we are either in the case of 
Assumption \ref{ass1} (ii) ($\mu=\frac{1}{2}$) with $\nu=1$ or in the case
of Assumption \ref{ass1} (i) ($\mu=0$) so that we can expect optimal
convergence of the residuals. 
We studied both $\mu=0$ and $\mu=\frac{1}{2}$ in \eqref{def:Ri} but found
the difference to be negligible so that we will only show results for one
of those choices in the following.

If not stated otherwise we use an explicit Runge-Kutta method of order $r$
and a reconstruction in time based on the Hermite polynomial $H(p,0,0)$ or
$H(p,0,-1)$ with $r,p$ chosen to match the rate of the spatial scheme.
In all the following figures we study both the norm of the residual and the
error. The figures on the left show the values using a logarithmic scale
and the corresponding experimental orders of convergence are shown in the
right plot. Note that from the estimate \eqref{eq:there} we focus only on
the term $\Norm{\bR^{st}}_{\leb{2}((0,t_n)\times \mathbb{T})}$ involving the residual
but will ignore the exponential factor and error in the initial conditions.
Also in all our tests the first term in the estimator involving the error of the
spatial reconstruction was of the same order as the residual. 

\subsection{Linear problem}
Consider the linear scalar conservation law
$$ \partial_t u + 8\partial_x u = 0 $$
in the domain $[0,2]$ with periodic boundary conditions and initial
condition $u(0,x) = 1-\frac{1}{2}\cos(\pi x)$. The problem is solved up to
time $T=0.4$. We compute the experimental
order of convergence starting our
simulations with $h=0.125$ and $\tau=0.02$ and reducing both $h$ and $\tau$
by a factor of $2$ in each step.  This leads to a CFL
constant of about $0.13$ which is sufficiently small for all polynomial degrees used in
the following simulations. 

In Figure \ref{fig:advModLW} we show the values and experimental order of
convergence of the error and the residual for different polynomial degrees.
It is evident that for all polynomial degrees $q=1,\dots,3$
both the error and the residual converge with the expected order of
$q+1$. In Figure \ref{fig:advLLF} we use a Lax-Friedrichs numerical flux function of the form 
\eqref{def:LLF}. For both $q=2$ and $q=3$ the suboptimal convergence of the residual
predicted by the theory presented in the previous section is evident.
Finally we investigate the influence of the temporal reconstruction on the
convergence of the residual in Figure~\ref{fig:advtrecon}. We use a Hermite
interpolation of one degree lower then our theory demands. For $q=2$ the
quadratic reconstruction is clearly not sufficient for optimal convergence
of the residual. For $q=3$ the results are not conclusive since the residual
is still converging with order $4$ for this simple test case.

\begin{figure}
\includegraphics[width=\textwidth]{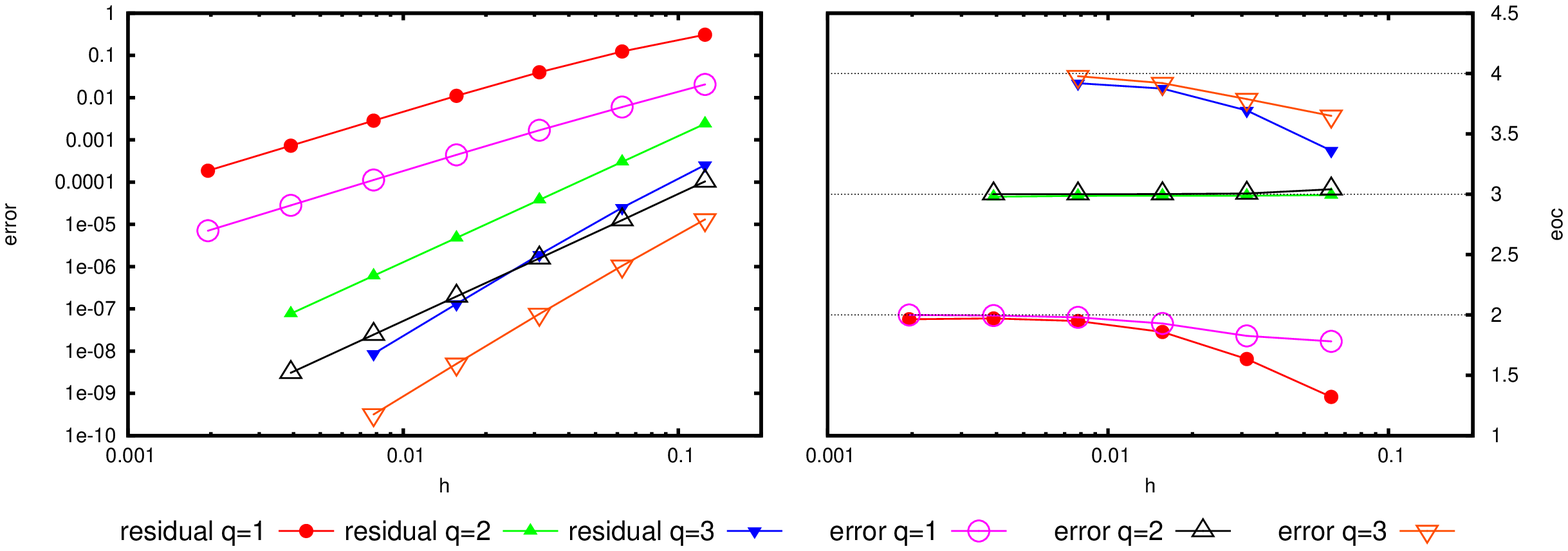}
\caption{Error and residuals for linear advection problem and polynomial
degree $q=1,\dots,3$ using flux \eqref{def:Ri} with $\mu=\frac{1}{2}$.}
\label{fig:advModLW}
\end{figure}

\begin{figure}
\includegraphics[width=\textwidth]{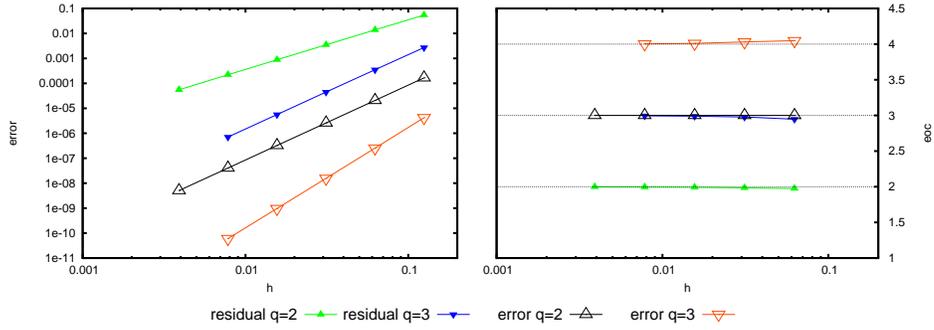}
\caption{Error and residuals for linear advection problem and polynomial
degree $q=2,3$ using a local Lax-Friedrichs type flux.}
\label{fig:advLLF}
\end{figure}

\begin{figure}
\includegraphics[width=\textwidth]{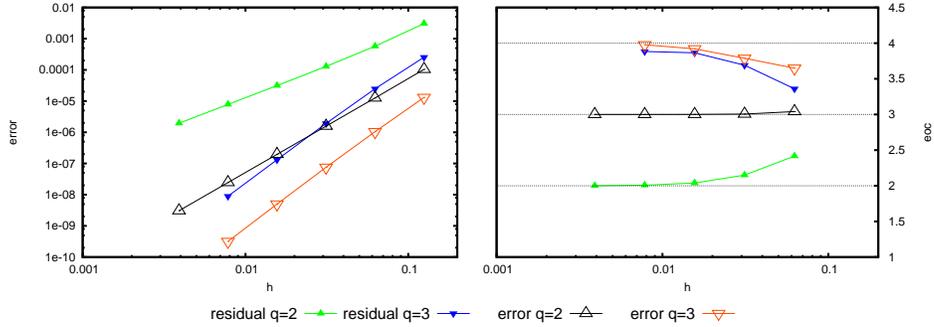}
\caption{Error and residuals for linear advection problem and polynomial
degree $q=2,3$ using flux \eqref{def:Ri} with $\mu=\frac{1}{2}$ but a
temporal reconstruction of lower order,}
\label{fig:advtrecon}
\end{figure}

\subsection{Euler equation}

We conclude our numerical experiments with some tests using the
compressible Euler equations of gas dynamics with an ideal pressure law
with adiabatic constant $\gamma=1.4$. We use the same domain and initial
grid as in the previous test case. The time step on the coarsest grid is
set to be $\tau=0.008$.
The initial conditions consist of a constant density and
velocity of $\rho=1,u=1$ and a sinusoidal pressure wave
$p(x)=1.3+\frac{1}{2}\sin(\pi x)$. We again use the flux~\eqref{def:Ri}
with $\mu=0$. In Figure~\ref{fig:eulerEvolution} a few snapshots
in time of the density are displayed. 
Note that we do not have an exact solution in this case and therefore we can not compute the
convergence rate of the scheme directly. But since the solution clearly
remains smooth up to $t=1$ we can expect our residual to converge with
optimal order up to this point in time. This is confirmed by the results shown in
Figure~\ref{fig:eulerLW}. 

\begin{figure}
\includegraphics[width=\textwidth]{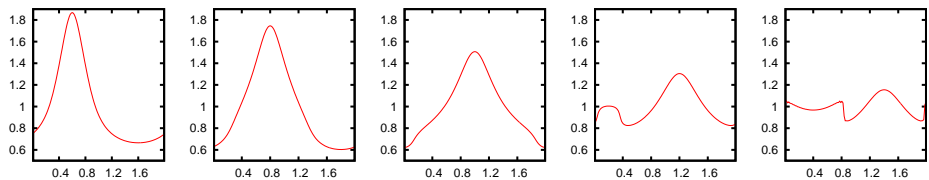}
\caption{Time evolution of the density for the Euler test case. From left
to right: $t=0.6,0.8,1.0,1.2,1.4$.}
\label{fig:eulerEvolution}
\end{figure}

\begin{figure}
\includegraphics[width=\textwidth]{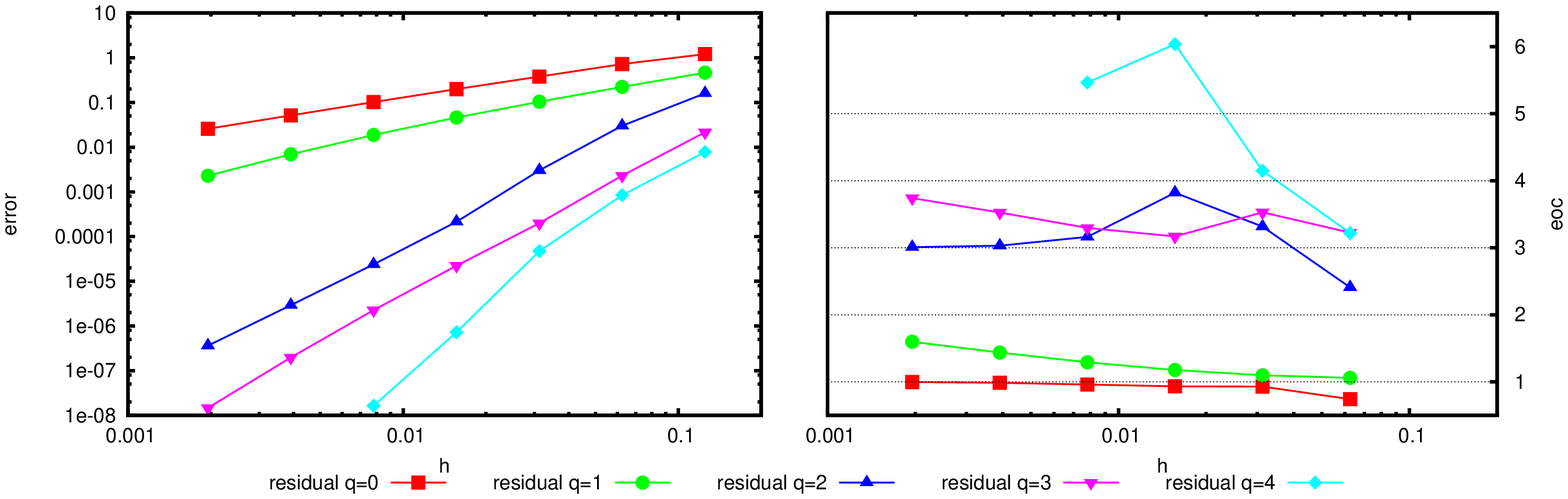}
\caption{Error and residuals for linear advection problem and polynomial
degree $q=0,\dots,4$ using flux \eqref{def:Ri} with $\mu=0$.}
\label{fig:eulerLW}
\end{figure}

\section{Conclusions}

In this work we extended previous results on a posteriori error estimation
based on relative entropy to a fully discrete scheme. 
The resulting estimator requires the computation of first a temporal and 
then a spatial reconstruction of the solution. 
The temporal reconstruction is based on a Hermite interpolation and is independent of
the time stepping method used. 
The spatial reconstruction uses the idea presented in
\cite{GMP_15}.  As noted there the reconstruction and the
flux have to be chosen carefully to guarantee optimal convergence of the
scheme. We managed to generalize the assumptions on the numerical flux 
greatly extending the class of schemes for which we can prove optimal convergence of the estimator.
This extended class of schemes now include Lax-Wendroff type flux with artificial diffusion
and we have shown that our extensions cover a Roe flux if upwinding in
the characteristic variables is used to define the reconstruction - simple
averaging on the other hand leads to suboptimal convergence as we have
shown. So we have managed to prove that in general the
requirements stated are not only sufficient but in fact
necessary and have also demonstrated numerically that otherwise the residual will converge
with an order smaller than the scheme.

In future work we plan to extend the results to higher space dimensions.
We will also investigate how the residual can be used to drive grid adaptation and
possibly to detect regions of discontinuities in the solution required in
the design of stabilization methods. These could either be based on local
artificial diffusion or on limiters typically used together with DG
methods. 

\bibliographystyle{alpha}
\bibliography{nskbib}

\end{document}